\renewcommand\p@enumii{}
\newcommand \R{\mathbb R}
\newcommand \ad{\operatorname{ad}}
\newcommand \tr{\operatorname{tr}}
\newcommand \ric{\operatorname{ric}}
\newcommand \Ric{\operatorname{Ric}}
\newcommand \End{\operatorname{End}}
\newcommand \rk{\operatorname{rk}}
\newcommand \Span{\operatorname{Span}}
\renewcommand \a{\mathfrak a}
\newcommand \ig{\mathfrak i}
\newcommand \g{\mathfrak g}
\newcommand \h{\mathfrak h}
\newcommand \m{\mathfrak{m}}
\newcommand \z{\mathfrak z}
\newcommand \<{\langle}
\renewcommand \>{\rangle}
\newcommand \ip{\<\cdot,\cdot\>}
\newcommand \G{\mathfrak G}
\newcommand \bu{\mathbf{U}}
\theoremstyle{plain}
\newtheorem{theorem}{Theorem}
\newtheorem{lemma}{Lemma}
\theoremstyle{definition}
\theoremstyle{remark}
\newtheorem{remark}{Remark}
\begin{document}
\title[Curvature of metric nilpotent Lie algebras]{Curvature properties of metric nilpotent Lie algebras which are independent of metric}
%Subsets of a particular sign of the curvature and of extremal Ricci curvature of metric nilpotent Lie algebras
%Subsets of stable sign and of extrema of the curvature in metric nilpotent Lie algebras
%On the curvature of nilpotent Lie groups
\author[G. Cairns, A. Hini\'c Gali\'c, Y. Nikolayevsky]{Grant Cairns, Ana Hini\'c Gali\'c and Yuri Nikolayevsky}

\subjclass[2010]{53C30, 17B30}

\address{Department of Mathematics and Statistics, La Trobe University, Melbourne,
Australia 3086}
\email{G.Cairns@latrobe.edu.au}
\address{Department of Mathematics and Statistics, La Trobe University, Melbourne,
Australia 3086}
\email{A.HinicGalic@latrobe.edu.au}
\address{Department of Mathematics and Statistics, La Trobe University, Melbourne,
Australia 3086}
\email{Y.Nikolayevsky@latrobe.edu.au}

\thanks{The authors were partially supported by ARC Discovery grant DP130103485}

%\date{\today}

\begin{abstract}
This paper consists of two parts. First, motivated by classic results, we determine the subsets of a given nilpotent Lie algebra $\g$ (respectively, of the Grassmannian of two-planes of $\g$) whose sign of Ricci (respectively, sectional) curvature remains unchanged for an arbitrary choice of a positive definite inner product on $\g$. In the second part we study the subsets of $\g$ which are, for some inner product, the eigenvectors of the Ricci operator with the maximal and with the minimal eigenvalue, respectively. We show that the closures of these subsets is the whole algebra $\g$, apart from two exceptional cases: when $\g$ is two-step nilpotent and when $\g$ contains a codimension one abelian ideal.
\end{abstract}

\maketitle

\section{Introduction}
\label{s: intro}

In the classic paper of Milnor \cite{Mi} it was shown that all non-abelian nilpotent Lie groups $G$ have some positive curvature and some negative curvature. The context here is that of left-invariant Riemannian metrics, so these are determined by a choice of inner product on the Lie algebra $\g$ of $G$. More explicitly,
Milnor showed that \cite[Lemmas 2.1, 2.3]{Mi}:
\begin{enumerate}
\item  for all $X$ in the centre $\z$ of $\g$, the sectional curvature satisfies $K(X,Y)\geq 0$ for all $Y \in \g$;
\item  for all $X$ orthogonal to the derived algebra $\g'=[\g,\g]$, the Ricci curvature satisfies $\Ric(X)\leq 0$.
\end{enumerate}
Examining some common nilpotent Lie algebras by taking the basis elements used for their presentations to be orthonormal, the impression one obtains is that the positive curvature is typically concentrated ``near the centre'', while the negative curvature is found at the ``upper levels of the algebra''. The aim of this paper is to explore the veracity of this maxim. We present some rather surprising results in Theorems~\ref{T:everymetric} and \ref{T:maxmin} below.

We begin by exploring both the sectional curvature and the Ricci curvature, and we consider two variants of the problem: properties that hold for some inner product, and  properties that hold for all inner products. For the first variant, given a nilpotent Lie algebra $\g$, we denote by $\g_>,\,\g_{\ge},\,\g_0,\,\g_{\le}$, and $\g_<$ the subsets of vectors $X \in \g$ such that for every choice of the inner product on $\g$, the Ricci curvature $\Ric(X)$ is positive, nonnegative, zero, nonpositive, and negative, respectively. Similarly, we denote by
$\G_>,\,\G_{\ge},\,\G_0,\,\G_{\le}$, and $\G_<$ the subsets of all two-planes $\sigma=\Span(X,Y)$ in the Grassmannian $G(2,\g)$ such that for any choice of the inner product on $\g$, the sectional curvature $\kappa(X,Y)$ is positive, nonnegative, zero, nonpositive, and negative, respectively. Clearly, $\g_> \subset \g_{\ge},\,\g_< \subset \g_{\le}, \, \g_0 \subset (\g_{\ge} \cap \g_{\le})$ (and the same is true with $\g$ replaced by $\G$) and, for an abelian algebra, $\g_>=\g_<=\varnothing$ and $\g_{\ge}=\g_0=\g_{\le}=\g$. It was proved in \cite{Gotoh} that $\g_{\ge}=\z$. Let $G(2,\g)$ (resp.~$G(2,\z)$) denote the Grassmannian of the two-planes lying in $\g$ (resp.~$\z$). We have the following theorem.

\begin{theorem}\label{T:everymetric}
Let $\g$ be a nonabelian nilpotent Lie algebra. Then
\begin{enumerate}[{\rm (a)}]
\item \label{it:everyR}
$ \g_< = \varnothing, \quad \g_{\le}=\g_0=\{0\},\quad \g_{\ge}=\z, \quad \g_>=(\g' \cap \z) \setminus \{0\}$.

\item  \label{it:everys}
$\begin{aligned}[t]
    \G_< &= \varnothing, \quad \G_{\le}=\G_0=G(2,\z),\\
    \G_{\ge} &=\{\Span(X,Y)\in G(2,\g) \, : \, [X,Y]=0 \; \& \; \forall Z \in \g, [X,Z] \parallel [Y,Z] \} \\
    &=\{\sigma\in G(2,\g) \, : \, [\sigma,\sigma]=0 \; \& \; \forall Z \in \g, \exists X \in \sigma: [X,Z]=0 \}, \\
    \G_> &=\{\Span(X,Y) \in G(2,\g) \, : \, X \in [Y,\g] \cap \z\}.
\end{aligned}$
\end{enumerate}
\end{theorem}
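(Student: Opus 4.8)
The plan is to base everything on Milnor's curvature formulas for a left-invariant metric on the nilpotent Lie group with Lie algebra $\g$: for an orthonormal basis $\{e_i\}$ of $(\g,\ip)$ and any $Z\in\g$,
\[
\ric(Z,Z)=-\tfrac12\sum_i|[Z,e_i]|^2+\tfrac14\sum_{i,j}\<[e_i,e_j],Z\>^2 ,
\]
and, for orthonormal $X,Y$, the sectional curvature $\kappa(X,Y)$ is Milnor's formula, which collapses to $\kappa(X,Y)=|U(X,Y)|^2-\<U(X,X),U(Y,Y)\>$ whenever $[X,Y]=0$, where $U$ is the symmetric map determined by $2\<U(V,W),Z\>=\<[Z,V],W\>+\<[Z,W],V\>$; in particular $U(X,X)=0$ when $X\in\z$. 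For each equality in the statement I would prove the two inclusions separately. The inclusions saying ``this curvature always has the stated sign'' are direct substitutions into these formulas, exploiting that many brackets vanish; the reverse inclusions, saying ``a vector or plane not in the set admits a metric realizing the opposite sign'', require building an adapted inner product, and that is where the effort goes.

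For part (a) the direct inclusions are immediate: if $X\in(\g'\cap\z)\setminus\{0\}$ then $\ad_X=0$ and $X\not\perp\g'$, so $\ric(X,X)=\tfrac14\sum_{i,j}\<[e_i,e_j],X\>^2>0$ for every metric, giving $X\in\g_>$; likewise $G(2,\z)\subseteq\G_0$, and the inclusions ``$\supseteq$'' for $\G_>$ and $\G_\ge$ come straight from the reduced sectional formula (for $\G_\ge$ one uses in addition that the hypothesis $[X,Z]\parallel[Y,Z]$ makes the $2\times2$ matrix with rows $(\<[Z,X],X\>,\<[Z,Y],X\>)$ and $(\<[Z,X],Y\>,\<[Z,Y],Y\>)$ singular, whence $\<U(X,Y),Z\>^2\ge\<U(X,X),Z\>\<U(Y,Y),Z\>$ for each $Z$, and summing over an orthonormal basis gives $\kappa(X,Y)\ge0$). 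For the reverse inclusions of (a) the device is a rescaling family: fix $X\ne0$, choose a vector-space complement $\V$ of $\R X$, fix an inner product on $\V$, and let $g_\lambda$ be the metric with $\R X\perp\V$, $|X|=\lambda$, and $g_\lambda|_\V$ fixed. A short computation gives, for the metric $g_\lambda$, that $\ric(X,X)=-\tfrac12\sum_\alpha|v_\alpha|^2+\tfrac14\,\lambda^4\sum_{\alpha,\beta}d_{\alpha\beta}^2$, where, for an orthonormal basis $\{f_\alpha\}$ of $\V$, $v_\alpha$ is the $\V$-component of $[X,f_\alpha]$ and $d_{\alpha\beta}$ the $\R X$-component of $[f_\alpha,f_\beta]$. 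Two algebraic facts then finish (a). First, because $\ad$ is nilpotent, $[X,\g]\not\subseteq\R X$ whenever $X\notin\z$, so $\V$ may be chosen with some $v_\alpha\ne0$, and $\lambda\to0$ gives a metric with $\ric(X,X)<0$; with Milnor's $\ric\ge0$ on $\z$ this yields $\g_\ge=\z$. Second, a codimension-one subalgebra of a nilpotent Lie algebra is an ideal, hence contains $\g'$; from this one checks that for every $X\ne0$ some complement $\V$ of $\R X$ is not a subalgebra, so $\sum d_{\alpha\beta}^2>0$ and $\lambda\to\infty$ gives a metric with $\ric(X,X)>0$. Thus every nonzero vector carries a positive-Ricci metric, which gives $\g_\le=\g_0=\{0\}$ and $\g_<=\varnothing$; and for $X\in\z\setminus\g'$ a metric with $X\perp\g'$ has $\ric(X,X)=0$, completing $\g_>=(\g'\cap\z)\setminus\{0\}$.

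Part (b) follows the same template, but the metric constructions are substantially more involved; this is the hard part of the theorem. The direct inclusions ``$\supseteq$'' are again computations as above. For the reverse inclusions one organizes into cases according to whether $[\sigma,\sigma]=0$, whether $\ad_Z|_\sigma$ has rank $2$ for some $Z$, and whether $\sigma$ meets $\z$, and in each case exhibits a suitable inner product: (i) if $[X,Y]\ne0$, shrinking $\sigma$ relative to a complement makes the $-\tfrac34|[X,Y]|^2$ term of Milnor's formula dominate, so $\kappa(X,Y)<0$ for some metric; (ii) if $[X,Y]=0$ but $\ad_Z|_\sigma$ has rank $2$ for some $Z$, one chooses a metric for which the determinant of the compression of $\ad_Z$ to $\sigma$ beats its skew part and the contributions of the other basis directions, giving $\kappa(X,Y)<0$; together, (i) and (ii) prove $\G_\ge\subseteq\{\dots\}$; (iii) if $\sigma\not\subseteq\z$, a metric making a suitable orthogonality fail (or, when $[\sigma,\sigma]\ne0$, a rescaling argument) gives $\kappa(\sigma)>0$, so $\G_\le=\G_0=G(2,\z)$ and $\G_<=\varnothing$; and (iv) if no basis $X,Y$ of $\sigma$ has $X\in[Y,\g]\cap\z$, a similar construction gives $\kappa(\sigma)\le0$, so $\G_>\subseteq\{\dots\}$. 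The main obstacle throughout (i)--(iv) is designing these adapted one-parameter families of inner products --- rescalings along carefully chosen subspaces relative to $\sigma$, to $\z$, and to the relevant bracket images --- and controlling the limiting curvature, all while keeping the inner products positive definite and handling the bookkeeping of the $U$-terms in Milnor's sectional formula.
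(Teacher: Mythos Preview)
Your treatment of part~(a) is complete and in fact cleaner than the paper's. Both arguments rest on deforming the metric to make one term of the Ricci formula dominate, but where the paper uses a full diagonal deformation $g_t(\cdot,\cdot)=\langle e^{tD}\cdot,\cdot\rangle$ and a somewhat ad hoc choice of three vectors $X,Y,Z$ to prove $\g_{\le}=\{0\}$, you get away with the single--parameter family $g_\lambda$ stretching $\R X$ and the structural fact that any codimension--one subalgebra of a nilpotent algebra is an ideal containing $\g'$ (hence some complement of $\R X$ fails to be a subalgebra). Your closed formula $\Ric_{g_\lambda}(X,X)=-\tfrac12\sum_\alpha|v_\alpha|^2+\tfrac14\lambda^4\sum_{\alpha\beta}d_{\alpha\beta}^2$ is correct (the $c_\alpha$--terms cancel), and both limiting directions $\lambda\to 0,\infty$ do exactly what you claim.

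For part~(b), your verification of the inclusions ``$\supseteq$'' is correct and essentially matches the paper; your termwise inequality $\langle U(X,Y),Z\rangle^2\ge\langle U(X,X),Z\rangle\langle U(Y,Y),Z\rangle$ under $[X,Z]\parallel[Y,Z]$ is equivalent to the paper's formula $K(X,Y)=\tfrac14\sum_i(\langle X,[e_i,Y]\rangle-\langle Y,[e_i,X]\rangle)^2$. However, your items (i)--(iv) for the reverse inclusions are only an outline, and in (ii) there is a real gap. Saying ``choose a metric for which the determinant of the compression of $\ad_Z$ to $\sigma$ beats its skew part and the contributions of the other basis directions'' does not explain how the other directions are controlled: for a fixed metric all the basis terms contribute simultaneously, and one cannot simply make one of them negative without affecting the rest. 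The paper handles this not by exhibiting a single bad metric but by assuming $\sigma\in\G_\ge$ and running a carefully staged asymptotic deformation: first a choice $e_1\perp\sigma$ with $\lambda_1=1$, $\lambda_2=\dots=\lambda_n=0$ forces $[X,Y]=0$; then a second choice $e_n=e$ with $e_1\perp\Span(e,Y,[e,Y])$ and $\lambda$'s of sizes $10,9,2,\dots,2,0$ isolates the term $\Psi_{n12}+\Psi_{n21}$ and, via Lemma~1, yields $[e,X]\in\Span(Y,[e,Y])$ and finally $[Z,X]\parallel[Z,Y]$. None of this emerges from a single ``shrink $\sigma$'' rescaling. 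Similarly, your (iv) for $\G_>$ should (and in the paper does) pass through the structural description of $\G_\ge$ in Lemma~2 rather than a direct construction. So part~(b) as written identifies the correct strategy but leaves the substantive construction undone.
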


\begin{remark} \label{rem1}
It follows from Theorem~\ref{T:everymetric}\eqref{it:everys} that if $\g$ is a nilpotent Lie algebra and $X\in\g$, then $X\in\z$ if and only if $\Span(X,Y)\in \G_{\ge}$ for all $Y\in\g$. This fact is true for all Lie algebras; it was conjectured by Milnor \cite{Mi} and proved in \cite{Abib, Ue}.
A more explicit (and somewhat nicer) description of the set $\G_{\ge}$ will be given in Lemma~\ref{L:Gge} in Section~\ref{S:signstable}.
\end{remark}

Our second result shows that, with a few exceptions, the Ricci curvature of a nilpotent Lie algebra can attain its maximum and its minimum on almost every vector, for appropriate choices of inner product. We consider the Ricci operator $\ric \in \End(\g)$ defined by $\< \ric X, Y \>=\Ric(X,Y)$, for $X, Y \in \g$, and we examine the maximal and the minimal eigenvalues of $\ric$ and the corresponding eigenvectors. For a linear space $L$, denote $\mathbb{P}L$ the projective space over $L$, and $\pi: L \setminus \{0\} \to \mathbb{P}L$ the natural projection. A point $u \in \mathbb{P}\g$ is called \emph{Ricci-maximal} (respectively, \emph{Ricci-minimal}), if there exists an inner product $\ip$ on $\g$ such that a vector $X \in \pi^{-1}(u) \subset \g$ is an eigenvector of the Ricci operator $\ric$ for $\ip$ with the maximal (respectively, minimal) eigenvalue. We have the following theorem.

\begin{theorem}\label{T:maxmin}
Let $\g$ be a nilpotent Lie algebra. Let $M, m \subset \mathbb{P}\g$ be the sets of Ricci-maximal and Ricci-minimal
points, respectively.

\begin{enumerate}[{\rm (a)}]
\item \label{IT:max}
\begin{enumerate}[{\rm (i)}]
  \item \label{IT:2step}
  If $\g$ is two-step nilpotent, then $\overline{M}=\mathbb{P}\g'$.
  \item \label{IT:codim1}
  If $\g$ has a codimension one abelian ideal $\a$ and is not two-step nilpotent, then $\overline{M}=\mathbb{P}\a$.
  \item \label{IT:other}
  In all the other cases, $\overline{M}=\mathbb{P}\g$.
\end{enumerate}

\item \label{IT:min}
$\overline{m}=\mathbb{P}\g$.
\end{enumerate}

\end{theorem}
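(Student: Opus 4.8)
The starting point is the standard expression for the Ricci operator of a nilpotent metric Lie algebra $(\g,\ip)$: for an orthonormal basis $\{e_i\}$,
\[
\ric=-\tfrac12 M_1+\tfrac14 M_2,\qquad M_1X=\sum_i(\ad_{e_i})^t[e_i,X],\qquad M_2X=\sum_{i,j}\langle X,[e_i,e_j]\rangle\,[e_i,e_j],
\]
so that $M_1,M_2$ are nonnegative symmetric, $M_2$ vanishes on $(\g')^\perp$ and has image in $\g'$ (whence Milnor's $\ric|_{(\g')^\perp}\le0$), and $\ric X=\tfrac14 M_2X\in\g'$ whenever $X\in\z$. The abelian case being trivial, assume $\g$ nonabelian; then $\g'\cap\z\ne\{0\}$ by nilpotency, so Theorem~\ref{T:everymetric}\eqref{it:everyR} gives that $\ric$ has a positive eigenvalue for every inner product, hence $\lambda_{\max}(\ric)>0$, while $\tr\ric<0$ gives $\lambda_{\min}(\ric)<0$. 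Each construction below starts from a fixed inner product and rescales it along a degenerating one-parameter family; the only non-formal ingredient each time is a spectral-gap estimate guaranteeing that, as a distinguished eigenvalue runs to $\pm\infty$, the corresponding eigenvector converges to the prescribed line, and I would treat this as routine.

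For part \eqref{IT:max}, cases \eqref{IT:2step} and \eqref{IT:codim1} rest on a block decomposition of $\ric$. If $\g$ is two-step, a short computation shows $\ric$ is block-diagonal for $\g=(\g')^\perp\oplus\g'$, equal to $-\tfrac12 M_1|_{(\g')^\perp}\le0$ on $(\g')^\perp$ and to $\tfrac14 M_2|_{\g'}$ on $\g'$; since $\lambda_{\max}>0$ the top eigenspace lies in $\g'$, so $\overline M\subseteq\mathbb P\g'$, and conversely, for $0\ne Z_0\in\g'$, rescaling the inner product on $\g'$ in the $Z_0$-direction drives $\lambda_{\max}\to+\infty$ with eigenvector $\to[Z_0]$, so $\mathbb P\g'\subseteq\overline M$. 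For \eqref{IT:codim1}, write $\g=\R X_0\ltimes_D\a$ with $\a=(\R X_0)^\perp$ the codimension-one abelian ideal, $|X_0|=1$ and $D=\ad_{X_0}|_\a$ nilpotent; the computation now yields that $\ric$ is again block-diagonal, with $\ric|_{\R X_0}=-\tfrac12\tr(D^tD)<0$ and $\ric|_\a=\tfrac12(DD^t-D^tD)$, a symmetric traceless operator that is nonzero because a nonzero nilpotent operator is never normal. Hence $\lambda_{\max}=\lambda_{\max}\!\bigl(\tfrac12(DD^t-D^tD)\bigr)>0$ is attained in $\a$ and $\overline M\subseteq\mathbb P\a$; and given $0\ne v_0\in\a$, choosing a complement $W$ of $\R v_0$ in $\a$ that is not $D$-invariant (possible since $D\ne0$) and rescaling $W$ makes $\lambda_{\max}(DD^t-D^tD)\to\infty$ with eigenvector $\to[v_0]$, so $\mathbb P\a\subseteq\overline M$. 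This block computation also yields one inclusion of part \eqref{IT:min} for almost-abelian $\g$: since $\lambda_{\min}\!\bigl(\tfrac12(DD^t-D^tD)\bigr)\ge-\tfrac12\tr(D^tD)$, the line $\a^\perp=\R X_0$ is a minimal eigendirection for \emph{every} such inner product, and these lines are dense in $\mathbb P\g$.

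Part \eqref{IT:other} is the core, and where I expect the real difficulty. Here $\overline M\subseteq\mathbb P\g$ is automatic, so the content is density of $M$. For $v_0\in\z\cap\g'$ the rescaling of \eqref{IT:2step} applies verbatim (using $\ric v_0=\tfrac14 M_2v_0\in\g'$), and this is promoted to a dense subset of $\mathbb P\g'$ by induction on $\dim\g$ or a deformation argument. The genuinely new case is $v_0\notin\g'$: since $\ric|_{(\g')^\perp}\le0<\lambda_{\max}$, any inner product making $v_0$ a top eigendirection must have $v_0\not\perp\g'$, so the construction has to ``tilt $v_0$ into $\g'$''. I would take a codimension-one ideal $\h\supseteq\g'$ with $v_0\notin\h$, realize $\g$ as the rank-one extension $\R v_0\ltimes_D\h$ with $D=\ad_{v_0}|_\h$, and choose the inner product on $\h$ together with the normalization of $v_0$ so that $\ric v_0$ becomes parallel to $v_0$ with a large positive eigenvalue; the hypothesis that $\g$ has \emph{no} codimension-one abelian ideal is precisely what keeps the sign obstruction of \eqref{IT:codim1} from reappearing and should provide the needed room, with an induction on $\dim\g$ applied to $\h$ (or to a proper quotient of $\g$) handling the non-abelian part $\h'$. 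The main obstacle is that, unlike in \eqref{IT:2step}--\eqref{IT:codim1}, the Ricci operator of a rank-one extension of a non-abelian algebra is not block-diagonal, so controlling the position of its top eigenvector requires a careful analysis of the cross-terms between $\R v_0$ and $\h'$, and of how they compete with the $\tfrac12\mu^{-1}(DD^t-D^tD)$-type behaviour on $\h$ as the normalizing parameter $\mu$ degenerates.

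For part \eqref{IT:min} (with the almost-abelian case already settled), take $v_0\notin\z$ with $\rk(\ad_{v_0})\ge2$, choose a codimension-one ideal $\h\supseteq\g'$ with $v_0\notin\h$, and use the inner product with $v_0\perp\h$, fixed on $\h$, and $|v_0|^2=\mu\to0$; with $D=\ad_{v_0}|_\h$ (so $\rk D\ge2$), the $v_0$-brackets dominate, $\langle\ric v_0,v_0\rangle/|v_0|^2=-\tfrac12\mu^{-1}\tr(D^tD)\to-\infty$, while the restriction of $\ric$ to $\h$ is $\tfrac12\mu^{-1}(DD^t-D^tD)+O(1)$ and the cross-block is $O(\mu^{-1/2})$; since $\lambda_{\min}\!\bigl(\tfrac12(DD^t-D^tD)\bigr)>-\tfrac12\tr(D^tD)$ strictly when $\rk D\ge2$, a spectral-gap estimate places the minimal eigendirection within $O(\mu^{1/2})$ of $[v_0]$, so $[v_0]\in\overline m$. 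The remaining (non-generic) directions, those with $\rk(\ad_{v_0})\le1$, are obtained by a limiting argument, using that for a suitable inner product the minimal eigenspace is then large — as already happens for two-step algebras, where it contains a full complement of $\g'$. Finally, any $v_0\in\z$ has $\Ric(v_0)\ge0$ for every inner product and so is never itself a minimal eigendirection ($\lambda_{\min}<0$), but applying the previous construction to $v_0+\varepsilon w$ for a fixed $w\notin\z$ and letting $\varepsilon\to0$ shows $[v_0]\in\overline m$; hence $\overline m=\mathbb P\g$.
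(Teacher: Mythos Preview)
Your block-diagonal analysis for \eqref{IT:2step} and \eqref{IT:codim1} is correct, and the rescaling arguments you sketch for the reverse inclusions do work: shrinking a complement $W$ of $\R v_0$ in $\a$ that is not $D$-invariant makes the rescaled operator $s\cdot(DD^t-D^tD)$ converge to $\mathrm{diag}(|c|^2,-cc^T)$, whose top eigenvalue is simple with eigenvector $v_0$; similarly the $Z_0$-rescaling in the two-step case. Your spectral-gap argument for \eqref{IT:min} when $v_0\notin\g'$ and $\rk(\ad_{v_0})\ge2$ is also sound, and the almost-abelian case is handled as you say.

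The genuine gap is \eqref{IT:other}, which you yourself flag as ``the core'' and leave as a programme (``I would take\dots should provide the needed room\dots the main obstacle is\dots''). The tilting-plus-induction idea does not get off the ground: since the positive part of $\ric$ lives in $\g'\subset\h$, in any rank-one presentation $\g=\R v_0\ltimes_D\h$ the contribution along $v_0$ is \emph{nonpositive}, so making $v_0$ a top eigenvector forces the cross-block to do all the work, and there is no scaling regime in which it dominates the $\h$-block while remaining controllable. The paper's proof is of a completely different nature and is not inductive. It rests on a technical lemma (Lemma~\ref{L:mM}) computing, for several specific diagonal degenerations $g_t=\langle e^{tD}\cdot,\cdot\rangle$, the limit $\Phi^0=\lim_{t\to\infty}2e^{-td}\ric_t$ and its (simple) top eigenvector as an explicit polynomial in brackets of the chosen $u_i,e_j$. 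This is first applied under open ``generic'' conditions $\rk(X_1,X_2,X_{12},X_{112},X_{212})=5$ or $\rk(X_1,\dots,X_{312})=7$, which directly give $\pi(X_1)\in\overline M$ for a dense set of $X_1$. The algebras violating \emph{both} conditions are then classified (Lemma~\ref{L:structure}): beyond the two-step and almost-abelian cases, they are one-dimensional extensions of two-step algebras, either central with a cocycle constraint, or by a nilpotent derivation $D$ satisfying $[DX,X]=0$; in the latter case a generic three-generated subalgebra is shown to be one of four explicit $5$- or $6$-dimensional algebras, and the proof concludes by a tailored application of Lemma~\ref{L:mM}\eqref{IT:eu}\eqref{IT:e2u3} to each. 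None of this structural work is replaceable by a soft argument.

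There is a second, smaller gap in \eqref{IT:min}: your generic argument needs \emph{some} $X$ with $\rk(\ad_X)\ge2$, but in a Heisenberg algebra (or Heisenberg\,$\times$\,abelian) every $\ad_X$ has rank $\le1$, and these are not almost-abelian for $\dim\ge5$; so there is nothing for your limiting argument to limit from. The paper closes this by classifying the algebras with $\dim\mathcal L(X_1,X_2,X_3)\le4$ for all triples (Lemma~\ref{L:notL}) --- precisely Heisenberg\,$\times$\,abelian and the four-dimensional filiform --- and computing $\ric$ for an explicit family of inner products on each, exhibiting a minimal eigenspace that sweeps out $\mathbb P\g$.
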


Let $\g$ be a Lie algebra. For a subalgebra $\h \subset \g$, its derived algebra and the centre are denoted by $\h'$ and $\z(\h)$ respectively (and we replace $\z(\g)$ by simply $\z$). We use $\oplus$ for the direct sum of linear spaces, not of Lie algebras (even when both summands are Lie algebras). When we say that a Lie algebra is defined by certain relations between basis elements, all the brackets which are not listed (and do not follow from the listed ones by skew-symmetry) are assumed to be zero. We say that a certain condition is satisfied for \emph{almost all} elements of a topological space, if it is satisfied for a dense subset of elements (in the most cases through the paper it will also be open).

The paper is organised as follows: after giving brief preliminaries in Section~\ref{s:pre}, we prove Theorem~\ref{T:everymetric} in Section~\ref{S:signstable}. The proof of Theorem~\ref{T:maxmin} given in Section~\ref{S:extrema} relies on a series of lemmas whose proves are given in Sections~\ref{S:lemma5} and \ref{S:lemmata}.

We remark that there have been several recent papers that have investigated the curvature properties of Lie groups; see \cite{Cheb,Ch,KN1,KN2,Ni}.

The authors gratefully acknowledge the contribution of Marcel Nicolau (Barcelona). 

%We employ the classification of real nilpotent Lie algebras of dimension up to $6$, which is well established and has been verified by several authors. We use the classification of Nielsen \cite{Niel}; see also \cite{dG}.
%For further recent papers on the geometry of metric Lie algebras, see \cite{CHN1,CHN2,CLNN,CKM,KS,Mc}.

\section{Preliminaries}
\label{s:pre}

Let $G$ be a Lie group with a left-invariant metric. The latter is completely determined by an inner product $\ip$ on the Lie algebra $\g$ of $G$. It is well known that the sectional curvature of the two-plane $\sigma=\Span(X,Y), \; X, Y \in \g$, is given by $K(X,Y) \|X \wedge Y\|^{-2}$, where
\begin{equation}\label{E:sect}
\begin{split}
K(X,Y)&=\|\bu(X, Y)\|^2 - \<\bu(X, X), \bu(Y, Y)\> -\tfrac34 \|[X, Y]\|^2 \\
    &\hphantom{=\|\bu(X, Y)\|^2 \;}-\tfrac12 \<[X, [X, Y]], Y\> -\tfrac12 \<[Y, [Y, X]], X\>,
\end{split}
\end{equation}
and $\<\bu(V,W),Z\> = \tfrac12(\<V, [Z, W]\> + \<W, [Z, V]\>$.

From this one can easily obtain the formula for the Ricci curvature (which is also well known). In particular, if $\g$ is nilpotent and $\{e_1, \dots, e_n\}$ is an orthonormal basis for $(\g, \ip)$, then the Ricci curvature is given by
\begin{align}\label{E:RicXY}
\Ric(X,Y)&=\frac14\sum_{i,j}\langle[e_i,{e_j}],X\rangle \langle[e_i,{e_j}],Y\rangle -\frac12\sum_{i}\langle[X,e_i],[Y,{e_i}]\rangle, \\
\Ric(X)&=\frac14\sum_{i,j}\langle X,[e_i,e_j]\rangle^2 -\frac12\sum_{i}\Vert[X,e_i]\Vert^2. \label{E:RicX}
\end{align}

We will need the following (generally known) lemma, the proof of which we postpone until Section \ref{S:lemmata}.

\begin{lemma}\label{L:general}
Let $\g$ be a nilpotent Lie algebra. Then
\begin{enumerate}[{\rm (a)}]
  \item \label{IT:br} if $[X, Y] \in \Span(X, Y)$ for some $X, Y \in \g$, then $[X,Y]=0$.
  \item \label{IT:2br} if $[X,[X, Y]] \in \Span(X, Y, [X,Y])$ for some $X, Y \in \g$, then $[X,[X, Y]]=0$.
 \item \label{IT:two} if $[X,[X, Y]]=0$, for all $X, Y \in \g$, then $\g$ is two-step nilpotent.
\end{enumerate}
\end{lemma}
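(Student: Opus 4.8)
The plan is to prove the three parts of Lemma~\ref{L:general} using the lower central series and the fact that $\g$ is nilpotent, which guarantees that every non-trivial ideal meets the centre non-trivially and, more usefully here, that repeated brackets eventually vanish.

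\textbf{Part (a).} Suppose $[X,Y] = aX + bY$ for scalars $a,b$. First I would dispose of trivial cases: if $X,Y$ are linearly dependent then $[X,Y]=0$ immediately. So assume $X,Y$ are independent, spanning a two-dimensional space $V$. The relation says $[X,Y] \in V$, so $V$ is a subalgebra of $\g$, and it is non-abelian precisely when $[X,Y]\neq 0$. But a two-dimensional non-abelian Lie algebra is isomorphic to the unique one with bracket $[e_1,e_2]=e_2$, which is not nilpotent. Since every subalgebra of a nilpotent Lie algebra is nilpotent, this is a contradiction; hence $[X,Y]=0$. (Alternatively, one can argue directly: writing $Z=[X,Y]=aX+bY$, one computes $[X,Z]=bZ$ and $[Y,Z]=-aZ$, so $\ad_X$ and $\ad_Y$ act on the line $\R Z$ with eigenvalues $b$ and $-a$; iterating $\ad_X$ or $\ad_Y$ can never kill $Z$ unless $a=b=0$, contradicting nilpotency of $\ad_X$ unless $Z=0$.)

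\textbf{Part (b).} Suppose $W := [X,[X,Y]] \in \Span(X,Y,Z)$ where $Z=[X,Y]$. By part (a) we may assume $Z\neq 0$ and $X,Y,Z$ are linearly independent (if $Z\in\Span(X,Y)$ we are back in case (a); if $W$ already lies in $\Span(X,Y)$, a short argument using part (a) applied to suitable combinations handles it, but the cleanest route is to observe that $U := \Span(X,Y,Z,W)$ with $W\in\Span(X,Y,Z)$ is a subalgebra). Write $W = aX + bY + cZ$. Applying $\ad_X$ repeatedly: $\ad_X(X)=0$, $\ad_X(Y)=Z$, $\ad_X(Z)=W=aX+bY+cZ$. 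So on the subspace $\Span(X,Y,Z)$ (which is $\ad_X$-invariant modulo the relation), the operator $\ad_X$ has, in the basis $X,Y,Z$, a matrix whose restriction to $\Span(Y,Z)$ is $\begin{pmatrix} 0 & b \\ 1 & c\end{pmatrix}$ plus a component into $X$. Nilpotency of $\ad_X$ forces the characteristic polynomial of this $2\times 2$ block to be $t^2$, i.e. $c=0$ and $b=0$. Then $W = aX$; but $W = [X,[X,Y]] \in \g'' $-type terms... more precisely $W\in[\g,\g']\subseteq$ the second term of the lower central series, while we can also pair: $\langle$-free argument$\rangle$ — the point is $W=aX$ with $W$ in a deeper stage of the central series than $X$ generically, and feeding $W=aX$ back gives $aZ = a[X,Y]=[X,W]=[X,aX]=0$, so $a=0$ (using $Z\neq0$). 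Hence $W=0$. The main obstacle is bookkeeping the degenerate linear-dependence configurations; each reduces, via part (a), to a lower-dimensional subalgebra that is forced to be abelian.

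\textbf{Part (c).} This is the cleanest: the hypothesis $\ad_X^2 = 0$ for all $X$, by polarisation (replace $X$ by $X+X'$ and expand), gives $\ad_X\ad_{X'} + \ad_{X'}\ad_X = 0$ as operators on $\g$ for all $X,X'\in\g$. Apply this identity to an arbitrary $Y\in\g$ and take the bracket structure: for all $X,X',Y$ we get $[X,[X',Y]] = -[X',[X,Y]]$. Combined with the Jacobi identity $[X,[X',Y]] + [X',[Y,X]] + [Y,[X,X']] = 0$, i.e. $[X,[X',Y]] - [X',[X,Y]] + [Y,[X,X']] = 0$, substituting the polarised relation yields $2[X,[X',Y]] = -[Y,[X,X']] = [[X,X'],Y]$, and then applying $\ad_X^2=0$ and symmetry one concludes $[[X,X'],Y]=0$ for all $X,X',Y$, i.e. $[\g',\g]=0$, which is exactly the statement that $\g$ is two-step nilpotent. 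I expect part (b) to be the main obstacle because of the case analysis; parts (a) and (c) are short structural arguments.
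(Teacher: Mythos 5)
Parts (a) and (c) of your proposal are sound and essentially the paper's own arguments: for (a) the paper computes $[X,[X,Y]]=b[X,Y]$ and $[Y,[X,Y]]=-a[X,Y]$ and invokes nilpotency of $\ad_X,\ad_Y$ (your two-dimensional-subalgebra version is an equally valid packaging of the same fact), and for (c) the polarisation-plus-Jacobi computation does close up: from $2[X,[X',Y]]=[[X,X'],Y]$, substituting the triple $(X,Y,X')$ for $(X,X',Y)$ and using the anticommutation once more yields $-2[X,[X',Y]]=[X,[X',Y]]$, hence all triple brackets vanish. Your reduction in (b) to $b=c=0$ is also legitimate: $\Span(X,Y,[X,Y])$ is $\ad_X$-invariant, $\R X$ lies in its kernel, and nilpotency of the induced operator on the quotient kills the $2\times 2$ block; this is a mild variant of the paper's device of applying $(\ad_X)^{l-1}$ and $(\ad_X)^{l-2}$ to the relation.

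The genuine flaw is the last step of (b). Having reached $W:=[X,[X,Y]]=aX$, you must show $a=0$, and the chain you write, $aZ=a[X,Y]=[X,W]=[X,aX]=0$, is not valid: there is no identity $[X,W]=a[X,Y]$. Indeed $[X,W]=[X,aX]=0$ holds automatically, whereas $a[X,Y]=aZ$ vanishes only when $a=0$ (since $Z\ne 0$), so the middle equality is exactly the assertion being proved; as written the argument is circular. The repair is immediate and is precisely the paper's final move: $W=aX$ means $[Z,X]=-aX$ with $Z=[X,Y]$, so $-a$ is an eigenvalue of the nilpotent operator $\ad_Z$ on the nonzero vector $X$, forcing $a=0$ and hence $W=0$. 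With that one line supplied, your proof of the lemma is complete and coincides in substance with the paper's.
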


\section{Proof of Theorem~\ref{T:everymetric}}
\label{S:signstable}

Starting from the classic results of \cite[Corollary~1.3, Lemma~2.3, Theorem~2.5]{Mi}, in this section we study the following question: for which vectors (respectively two-planes) in a nilpotent Lie algebra, does the Ricci curvature (respectively sectional curvature) have the same sign, regardless of the choice of inner product?
% need this?

\begin{proof}[Proof of Theorem~\ref{T:everymetric}]
\eqref{it:everyR} The last three equations follow from Theorem~2.5 of \cite{Mi} and the fact that $\g_{\ge} = \z$ is proved in \cite{Gotoh}. For completeness, we supply a proof here. Suppose that $X \notin \z$. Then $Z:=[X,Y] \ne 0$, for some $Y \in \g$. Moreover, $X, Y$ and $Z$ are linearly independent by Lemma~\ref{L:general}\eqref{IT:br}. By \cite[Theorem~2.5]{Mi}, $\Ric(X) < 0$, for some inner product on $\g$, so $X \notin \g_{\ge}$. It follows that $\g_{\ge} \subset \z$. The opposite inclusion is immediate from \eqref{E:RicX}, so $\g_{\ge} = \z$. Again, by \eqref{E:RicX}, a vector $X \in \z$ does not belong to $\g_>$ if and only if there exists an inner product such that $X \perp \g'$, and this occurs if and only if $X \notin \g'$ or $X = 0$. Thus $\g_>=(\g' \cap \z) \setminus \{0\}$. Consequently, $\g_0\subset  \{0\} \cup (\z\setminus \g')$. But if $X\in \z\setminus \g'$, then we may choose vectors $Y, Z \in \g$ with $[Y,Z]\not=0$, and take an inner product with $\<X,[Y,Z]\>\not=0$. Then \eqref{E:RicX} gives $\Ric(X)>0$ and so $X\not\in\g_0$. Hence $\g_0=\{0\}$.

We next prove that $\g_{\le}=\{0\}$. Suppose that $Z \in \g_{\le}, \; Z \ne 0$. As $\g$ is nonabelian, $\g'\cap \z\not=\{0\}$. So if $\R Z=\g'$, then $\g'\subset \z$ and hence $Z\in \g'\cap \z$, which would give $Z\in \g_>$, as we saw above. So we may assume that $\R Z \not=\g'$. We claim that there exist $X, Y \in \g$ so that $X,Y,Z$ are linearly independent and  $[X,Y]  \notin \R Z$. Indeed, as  $\R Z \not=\g'$, there exist $X, Y \in \g$ with $[X,Y] \notin \R Z$. If  $Z \in \Span(X,Y)$, consider the subalgebra $\h$ of $\g$ generated by $X,Y$. Note that $\h$ is a nilpotent algebra and $[X, Y] \notin \Span(X,Y)$ by Lemma~\ref{L:general}\eqref{IT:br}. If $Z=aX+bY$, define $X'=X+a[X,Y]$ and $Y'=Y+b[X,Y]$. Note that $X',Y',[X,Y]$ are linearly independent and by construction, $Z \notin \Span(X',Y')$. Furthermore, $[X',Y']$ is nonzero, as $[X',Y']=[X,Y]$ modulo $[\h,\h']$. So, as $Z \notin \h'$, we have $[X',Y']  \notin \R Z$, as claimed. Thus, by replacing $X,Y$ by $X',Y'$, we obtain that $X,Y,Z$ are linearly independent and  $[X,Y]  \notin \R Z$, as claimed.
Now choose an inner product $\<\cdot, \cdot \>$ on $\g$ such that $X, Y, Z$ are orthonormal and $\<Z,[X,Y]\> \ne 0$. Let $\{e_1, \dots, e_n\}$ be an orthonormal basis for $\g$ relative to $\<\cdot, \cdot \>$ such that $e_1=Z$, $e_{n-1} = X, e_n = Y$. Consider a one-parameter deformation $g_t$ of the inner product $\<\cdot, \cdot \>$ defined by $g_t(U,V)=\<e^{Dt}U,V\>$, where $D=\mathrm{diag}(\lambda_1, \dots, \lambda_n$) is a diagonal matrix relative to the basis $\{e_i\}$. Then the basis $\{E_i=e^{-\lambda_it/2}e_i\}$ is orthonormal for $g_t$ and from \eqref{E:RicX}, the Ricci curvature $\Ric_t(Z)$ of the inner product $g_t$ in the direction $Z$ has the following form:
\begin{align*}
    \Ric_t(Z)&=\Ric_t(e_1)=\frac14\sum_{i,j} g_t(e_1,[E_i,E_j])^2-\frac12\sum_{i}g_t([e_1,E_i],[e_1,E_i]) \\
     &=\frac14\sum_{i,j} g_t(e_1,[E_i,E_j])^2 -\frac12\sum_{i,j} g_t(E_j,[e_1,E_i])^2\\
   &=\frac14\sum_{i,j} e^{(2\lambda_1-\lambda_i-\lambda_j)t} \<e_1,[e_i,e_j]\>^2 -\frac12\sum_{i,j}e^{(\lambda_j-\lambda_i)t} \<e_j,[e_1,e_i]\>^2.
\end{align*}
Now choose the $\lambda_i$'s in such a way that $\lambda_1 > \lambda_2 \ge \dots \ge \lambda_{n-2} > \lambda_{n-1} > \lambda_n$. Then $2\lambda_1-\lambda_{n-1}-\lambda_n > 2\lambda_1-\lambda_i-\lambda_j$, for any $i\ne j, \; \{i,j\}\ne\{n-1, n\}$, and $2\lambda_1-\lambda_{n-1}-\lambda_n >$ $\lambda_j-\lambda_i$, for any $i,j$. It follows that
\begin{equation*}
\lim_{t \to \infty }e^{-(2\lambda_1-\lambda_{n-1}-\lambda_n)t}\Ric_t(Z) =\frac12 \<e_1,[e_{n-1},e_n]\>^2=\frac12 \<Z,[X,Y]\>^2 > 0,
\end{equation*}
which contradicts the fact that $Z \in \g_{\le}$.
Therefore $\g_{\le}=\{0\}$. The fact that $\g_< =\varnothing$ now follows immediately.

\medskip

\eqref{it:everys} Let $\{e_i: i=1, \dots, n\}$ be an orthonormal basis for $\g$ relative to $\<\cdot, \cdot \>$. As in part (a), consider a one-parameter deformation $g_t$ of the inner product $\<\cdot, \cdot \>$ defined by $g_t(U,V)=\<e^{Dt}U,V\>$, where $D=\mathrm{diag}(\lambda_1, \dots, \lambda_n$) is a diagonal matrix relative to the basis $\{e_i\}$. Then by a direct calculation from \eqref{E:sect}, for the inner product $g_t$ we get:
\begin{equation}\label{E:secdef}
\begin{gathered}
    K_t(X,Y)=\sum\nolimits_{i,j,k} e^{(\lambda_j+\lambda_k-\lambda_i)t} \Psi_{ijk}(X,Y)+\sum\nolimits_i e^{\lambda_it} \Phi_{i}(X,Y), \quad \text{where}\\
    \Psi_{ijk}(X,Y)\!=\!\tfrac14(\mu_{ij} (X, Y)+\mu_{ij} (Y, X)) (\mu_{ik} (X, Y)+\mu_{ik} (Y, X)) \!-\!\mu_{ij} (X, X) \mu_{ik} (Y, Y),\\
    \mu_{ij} (X, Y) = \<X,e_j\>\<e_j,[e_i,Y]\>,\\
    \Phi_{i}(X,Y)= -\tfrac34 \<[X, Y],e_i\>^2 -\tfrac12 \<Y,e_i\>\<[X, [X, Y]], e_i\> -\tfrac12 \<X,e_i\>\<[Y, [Y, X]], e_i\>.
\end{gathered}
\end{equation}

First suppose that  $\sigma=\Span(X,Y) \in \G_{\le}$. Taking $\lambda_1 > \lambda_2 \ge \dots \ge \lambda_{n-1} > \lambda_n$ we find that the maximal exponent in the expression for $K_t(X,Y)$ in \eqref{E:secdef} when $t \to \infty$ is $(\lambda_1+\lambda_1-\lambda_n)t$, so we necessarily have $\Psi_{n11} \le 0$. As $\Psi_{n11}=\frac14 (\mu_{n1} (X, Y)-\mu_{n1} (Y, X))^2$, this is only possible when
\begin{equation*}
    \<X,e_1\>\<e_1,[e_n,Y]\>=\<Y,e_1\>\<e_1,[e_n,X]\>,
\end{equation*}
which must be satisfied for any choice of the inner product $\ip$ and the orthonormal basis $\{e_i\}$. Now for a fixed inner product $\ip$ choose $e_n$ in such a way that $\rk(X,Y,e_n)=3$ (this is always possible as $\g$ is nonabelian, so $n=\dim \g \ge 3$), and then take $e_1$ to be an arbitrary unit vector orthogonal to $X$ and to $e_n$, but not orthogonal to $Y$. Then we get $e_1 \perp [e_n,X]$, which implies $[e_n,X] \in \Span(X,e_n)$ by continuity. But then $[e_n,X]=0$ by Lemma~\ref{L:general}\eqref{IT:br}, so by continuity $X \in \z$. Similarly $Y \in \z$ and then by \eqref{E:sect}, $K(X,Y)=0$ for any inner product $\ip$ on $\g$. It follows that $\G_{\le}=\G_{0}=G(2,\z)$, and hence $\G_< = \varnothing$.

\smallskip

Now suppose that $\sigma=\Span(X,Y) \in \G_{\ge}$. In \eqref{E:secdef}, choose the inner product $\ip$ and the orthonormal basis $\{e_i\}$ for $\g$ in such a way that $e_1 \perp \sigma$ and then take $\lambda_1=1, \; \lambda_2=\dots=\lambda_n=0$. Then $\mu_{i1}(X,Y)=\mu_{i1}(Y,X)=\mu_{i1}(X,X)=\mu_{i1}(Y,Y)=0$, so $\Psi_{ij1}=\Psi_{i1j}=0$ for all $i,j=1, \dots,n$, therefore the maximal (potentially nonzero) exponent in the expression for $K_t(X,Y)$ in \eqref{E:secdef} when $t \to \infty$ is $\lambda_1t=t$. Hence we must necessarily have $\Phi_{1} \ge 0$, which implies $\<[X, Y],e_1\>=0$, and then $[X,Y] \in \Span(X,Y)$, so $[X,Y]=0$ by Lemma~\ref{L:general}\eqref{IT:br}.

Furthermore, if $\sigma \cap \z$ is nonzero, then $\sigma \in \G_{\ge}$ by \cite[Corollary~1.3]{Mi}. Suppose that $\sigma \cap \z = \{0\}$. Then $n \ge 4$, as the only nonabelian three-dimensional nilpotent Lie algebra is the Heisenberg algebra, for which any abelian two-dimensional subalgebra $\sigma$ contains the centre. What is more, $\sigma$ is not an ideal of $\g$, as otherwise by Lie's Theorem, the adjoint representation of $\g$ on $\sigma$ would have had a nonzero kernel, which would then be spanned by a vector from $\z$. It follows that there exist $e \in \g$ and a $Y \in \sigma$ such that $\rk(\sigma \cup [e,Y])=3$. Note that this condition is open (so that for almost all $Y \in \sigma$ there exists an open, dense set of $e \in \g$ for which it holds) and that it implies $\rk(e,Y,[e,Y])=3$ and $X \notin \Span(e,Y,[e,Y])$ (where $X$ is an arbitrary vector such that $\sigma=\Span(X,Y)$). The former easily follows from Lemma~\ref{L:general}\eqref{IT:br}; to show the latter we assume that $X=\alpha e + \beta Y + \gamma [e,Y]$, where necessarily $\alpha \ne 0$. Then $0=[X,Y]=\alpha [e,Y] + \gamma [[e,Y],Y]$, which by Lemma~\ref{L:general}\eqref{IT:2br} implies $[[e, Y],Y]=0$, and hence $[e,Y]=0$. We now choose in \eqref{E:secdef} the inner product $\ip$ and the orthonormal basis $\{e_i\}$ for $\g$ in such a way that $e_n =e, \; e_1 \perp \Span(e, Y, [e,Y])$ and $e_1 \not\perp X$ (this is always possible since $n \ge 4$ and $X \notin \Span(e,Y,[e,Y])$) and then take $\lambda_1=10, \; \lambda_2=9, \; \lambda_3=\dots=\lambda_{n-1}=2, \; \lambda_n=0$. Then $\Psi_{n11}=0$, so the maximal (potentially nonzero) exponent in the expression for $K_t(X,Y)$ in \eqref{E:secdef} when $t \to \infty$ is $(\lambda_1+\lambda_2-\lambda_n)t=19t$. Hence we must necessarily have $\Psi_{n12}+ \Psi_{n21} \ge 0$, which gives
\begin{equation*}
    \<X,e_1\>\<e_1,[e_n,X]\> \<Y,e_2\>\<e_2,[e_n,Y]\> \le 0.
\end{equation*}
For this to hold we either have to have $\<e_1,[e_n,X]\>=0$, or otherwise, as the choice of $e_2$ in $\Span(e_1,e_n)^\perp$ was arbitrary, the projections of the vectors $Y$ and $[e_n,Y]$ to $\Span(e_1,e_n)^\perp$ must be collinear. The second possibility quickly leads to a contradiction, as by our choice, $e_n=e$ and $e_1 \perp Y, [e,Y]$, so $[e,Y]-\<[e,Y],e\>e \parallel Y-\<Y,e\>e$, which contradicts the fact that $\rk(e,Y,[e,Y])=3$ established above. Thus $\<e_1,[e,X]\>=0$ for all $e_1 \perp \Span(e, Y, [e,Y])$ (the condition $e_1 \not\perp X$ can be dropped by continuity), therefore $[e,X] \in \Span(e, Y, [e,Y])$. It follows that $[e,X] = \alpha e + \beta Y + \gamma [e,Y]$ for some $\alpha, \beta, \gamma \in \R$. Let $l \ge 1$ be such that $\ad^l_Y \ne 0$, but $\ad^{l+1}_Y = 0$. Acting on the both sides of the last equation by $\ad^l_Y$ and using the fact that $[X,Y]=0$ (so that $\ad_Y$ and $\ad_X$ commute) we get $\ad_X(\ad^l_Ye) = \alpha (\ad^l_Ye)$, so $\alpha=0$, which gives $[e,X] = \beta Y + \gamma [e,Y]$. As the condition defining $e$ was open we can choose $X$ and $Y$ spanning $\sigma$ such that for almost all $e \in \g$ we get $[e,X] = \beta Y + \gamma [e,Y]$ and $[e,Y] = \delta X + \xi [e,X]$ for some $\beta, \gamma, \delta, \xi \in \R$ which depend on $X,Y,e$. But then $[e,Y] = \delta X + \xi (\beta Y + \gamma [e,Y])$, so $\delta=0$ as $\rk(\sigma \cup [e,Y])=3$. It follows that $[e,Y] = \xi [e,X]$, so by continuity, $[Z,Y] \parallel [Z,X]$ for all $X, Y \in \sigma$ and all $Z \in \g$.

Thus a necessary condition for $\sigma \in \G_{\ge}$ is that $\sigma=\Span(X,Y)$, with $[X,Y]=0$ and $[Z,Y] \parallel [Z,X]$, for any $Z \in \g$ (alternatively, for any $Z \in \g$ there exists $X \in \sigma$ such that $[X,Z]=0$). Note that if $\sigma$ has a nonzero intersection with $\z$, this condition is also satisfied. To check that this condition is also sufficient, we let $\ip$ be an arbitrary inner product on $\g$ and $\{e_i\}$ be an orthonormal basis. As $[X,Y]=0$ we get from \eqref{E:sect} that $K(X,Y)=\|\bu(X,Y)\|^2-\<\bu(X,X), \bu(Y,Y)\>$. Denote $u_i=[e_i, X], \; v_i=[e_i, Y]$. Then $\|\bu(X,Y)\|^2-\<\bu(X,X), \bu(Y,Y)\> = \sum_i(\frac14(\<X,v_i\>+\<Y,u_i\>)^2-\<X,u_i\>\<Y,v_i\>)$. But as $u_i \parallel v_i$ we have $\<X,u_i\>\<Y,v_i\> =\<X,v_i\>\<Y,u_i\>$, so %$K(X,Y)= \sum_i(\frac14(\<X,v_i\>+\<Y,u_i\>)^2-\<X,v_i\>\<Y,u_i\>=\sum_i(\frac14(\<X,v_i\>-\<Y,u_i\>)^2$, as required.
\begin{equation}\label{E:Knonneg}
K(X,Y)= \frac14\sum\nolimits_i(\<X,v_i\>-\<Y,u_i\>)^2 = \frac14\sum\nolimits_i(\<X,[e_i, Y]\>-\<Y,[e_i, X]\>)^2,
\end{equation}
as required.

\smallskip

Before examining $\G_>$, we pause to further clarify the nature of $\G_\ge$. We will also need the following lemma in the subsequent consideration of $\G_>$.

\begin{lemma}\label{L:Gge}
The set $\G_{\ge}$ can be represented as
  $\G_{\ge}=\G_1 \cup \G_2$, where
  \begin{itemize}
    \item $\G_1$ is the set of all the two-planes having a nontrivial intersection with $\z$,
    \item $\G_2$ is the set of all the two-planes $\sigma$ with the following property: there exists a three-dimensional
    abelian ideal $\a_3 \supset \sigma$ such that $\dim[\g,\a_3]=1$.
  \end{itemize}
\end{lemma}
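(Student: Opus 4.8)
The plan is to prove the two inclusions $\G_1\cup\G_2\subseteq\G_{\ge}$ and $\G_{\ge}\subseteq\G_1\cup\G_2$ separately, exploiting the description of $\G_{\ge}$ obtained above: $\sigma=\Span(X,Y)\in\G_{\ge}$ if and only if $[X,Y]=0$ and $[Z,X]\parallel[Z,Y]$ for every $Z\in\g$. The inclusion $\G_1\cup\G_2\subseteq\G_{\ge}$ is immediate: if $\sigma\in\G_1$, a nonzero vector of $\sigma\cap\z$ gives $\kappa(X,Y)\ge0$ by \cite[Corollary~1.3]{Mi} (as already noted); and if $\sigma\subseteq\a_3$ with $\a_3$ a three-dimensional abelian ideal and $\dim[\g,\a_3]=1$, then $[X,Y]\in[\a_3,\a_3]=0$, while $[Z,X]$ and $[Z,Y]$ both lie in the line $[\g,\a_3]$ for every $Z$, so $\sigma\in\G_{\ge}$.

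For the converse, fix $\sigma=\Span(X,Y)\in\G_{\ge}$ with $\sigma\cap\z=\{0\}$ (otherwise $\sigma\in\G_1$); the task is to produce the ideal $\a_3$. Since $\sigma\cap\z=\{0\}$, neither $\ad_X$ nor $\ad_Y$ vanishes and $\ad_Y$ is not a scalar multiple of $\ad_X$ (a relation $\ad_Y=\lambda\ad_X$ would put the nonzero vector $Y-\lambda X\in\sigma$ into $\z$). The condition $[Z,X]\parallel[Z,Y]$ for all $Z$ confines the image of the linear map $Z\mapsto([Z,X],[Z,Y])$ to the variety of proportional pairs; an elementary fact about linear subspaces of that variety, together with the exclusion of the alternative $[Z,Y]=\lambda[Z,X]$ (all $Z$, one $\lambda$), shows that one of $[\g,X],[\g,Y]$ is at most one-dimensional. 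After possibly interchanging $X$ and $Y$ we may thus assume $\ad_X=W\otimes g$ has rank one, with $0\ne W\in\g$ and $g\in\g^{*}$. A short continuity argument---$[\g,X]=\R W$, and $[Z,Y]\parallel[Z,X]$ on the dense set where $[Z,X]\ne0$---then gives $[\g,Y]\subseteq\R W$, so $\ad_Y=W\otimes h$ for some $h\in\g^{*}$, and $[\g,\sigma]=\R W$. Nilpotency of $\ad_X$ and $\ad_Y$ forces $g(W)=h(W)=0$, that is $[X,W]=[Y,W]=0$; and $W\notin\sigma$, since $W\in\sigma$ would give $[\g,\sigma]=\R W\subseteq\sigma$, making the nonzero ideal $\sigma$ of the nilpotent algebra $\g$ meet $\z$, a contradiction.

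The crux is to show $W\in\z$, equivalently $[\g,W]\subseteq\R W$, and for this I would use that $\ad$ is a homomorphism. From $[X,Z]=g(Z)W$ we get $g(Z)\ad_W=\ad_{[X,Z]}=[\ad_X,\ad_Z]$, and evaluating on $V\in\g$ gives $g(Z)[W,V]=g([Z,V])W-g(V)[Z,W]$ for all $Z,V$; choosing $Z_0$ with $g(Z_0)=1$ yields $[W,V]\equiv-g(V)[Z_0,W]\pmod{\R W}$. The same computation with $Y,h$ in place of $X,g$ yields $[W,V]\equiv-h(V)[Z_1,W]\pmod{\R W}$ for a suitable $Z_1$, so $g(V)[Z_0,W]\equiv h(V)[Z_1,W]\pmod{\R W}$ for all $V$. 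Since $g$ and $h$ are linearly independent, a vector $V$ with $g(V)=1$, $h(V)=0$ gives $[Z_0,W]\in\R W$, whence $[W,V]\in\R W$ for every $V$; thus $\R W$ is an ideal, and a one-dimensional ideal of a nilpotent Lie algebra is central, so $W\in\z$. Finally $\a_3:=\Span(X,Y,W)$ is three-dimensional ($W\notin\sigma$), abelian ($[X,Y]=[X,W]=[Y,W]=0$), an ideal ($[\g,X],[\g,Y]\subseteq\R W$ and $[\g,W]=0$), with $[\g,\a_3]=\R W$ one-dimensional, so $\sigma\in\G_2$. The main obstacle is precisely this last passage---from the pointwise parallelism to the global fact that $[\g,\sigma]$ is a central line---and the $\ad$-homomorphism identity is the device that forces it.
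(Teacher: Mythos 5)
Your proof is correct and follows essentially the same route as the paper's: the same dichotomy (either $\sigma$ meets $\z$, or both $\ad_X$ and $\ad_Y$ have rank one with a common image line $\R W$), the same use of the Jacobi identity together with the linear independence of the two one-forms to show $W\in\z$, and the same assembly of $\a_3=\Span(X,Y,W)$. The only differences are cosmetic: you package the rank-one step as a general fact about pointwise-dependent linear maps and phrase the centrality argument via $\ad_{[X,Z]}=[\ad_X,\ad_Z]$, where the paper manipulates specific vectors and kernels of the forms $\theta_1,\theta_2$.
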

\begin{proof}
We have shown that a two-plane $\sigma=\Span(X,Y)$ is in $\G_{\ge}$ if and only if $[X,Y]=0$ and for every $Z \in \g$ we have $\ad_XZ \parallel \ad_YZ$.

Clearly, $\G_1 \subset \G_{\ge}$. Let $\sigma\in \G_{\ge}$. If $\ad_X=0$, then $\sigma \in \G_1$. Otherwise, $\ad_XZ_1 =P \ne 0$ for some $Z_1 \in \g$ and we can take $Y \in \sigma$ such that $\ad_YZ_1=0$. Consider $Z_2\in\g$ with $\ad_XZ_2=P_2\not=0$. Then $\ad_YZ_2=aP_2$ for some $a\in\R$, and so
\begin{equation*}
  \ad_X(Z_1+Z_2)=P+P_2, \qquad \ad_Y(Z_1+Z_2)=aP_2.
\end{equation*}
Then either $P_2 \parallel P$ or $a=0$, and in the latter case, $\ad_YZ_2=0$. If for some vector $Z_2$ we have $P_2 \nparallel P$, then $\ad_XZ \nparallel P$, for almost all $Z \in \g$, and hence $\ad_YZ=0$, so $Y\in\z$ and $\sigma\in \G_1$. Otherwise, we have $\ad_XZ , \ad_YZ\parallel P$, for all $Z \in \g$. Therefore there exist one-forms $\theta_1, \theta_2 \in \g^*$ such that
\begin{equation}\label{E:theta}
    [X,Z]=\theta_1(Z) P, \quad [Y,Z]=\theta_2(Z) P, \quad \text{for all} \; Z \in \g.
\end{equation}
From the first equation of \eqref{E:theta} it follows that $[[X,Z],U]=\theta_1(Z) [P,U]$, so by the Jacobi identity, $\theta_1(Z) [P,U]-\theta_1(U) [P,Z]+\theta_1([U,Z]) P=0$. Taking $ Z \notin \ker \theta_1$ we get $[P,U]=0$, for all $U \in \ker \theta_1$, by Lemma~\ref{L:general}\eqref{IT:br}. Similarly, from the second equation of \eqref{E:theta}, $[P,U]=0$, for all $U \in \ker \theta_2$. Note that for $\sigma \in \G_{\ge} \setminus \G_1$, we must have $\theta_1 \nparallel \theta_2$ in \eqref{E:theta}. This implies that $P \in \z$. Moreover, $P \notin \sigma$, as otherwise $\sigma \in \G_1$. Now, as $P\in\z$ and by \eqref{E:theta}, the subspace $\a_3=\Span(X,Y,P)$ is a three-dimensional ideal, which is abelian (as $[X,Y]=0$) and satisfies $\dim[\g,\a_3]=1$, so $\sigma\in \G_2$.

Conversely, given any three-dimensional abelian ideal $\a_3$, with $\dim[\g,\a_3]=1$, let $P$ be a nonzero vector from $[\g,\a_3]$. Then $P \in \z$, by Lemma~\ref{L:general}\eqref{IT:br}. Consider a two-plane $\sigma $ in $\a_3$. If $\sigma$ contains $P$, then $\sigma \in \G_1$. Otherwise, equations  \eqref{E:theta} are satisfied (but possibly, with $\theta_1 \parallel \theta_2$), so $\sigma \in \G_{\ge}$. Hence $\G_{\ge}=\G_1 \cup \G_2$.
\end{proof}

\begin{remark}
Concerning the above lemma, note that depending on $\g$, it may, or it may not happen that $\G_{\ge}=\G_1$ (so that $\G_{\ge}$ consists only of the two-planes having a nontrivial intersection with the centre). An example with $\G_{\ge}=\G_1$ is the filiform algebra defined by $[X_i,X_j]=(j-i)X_{i+j}$, for $1 \le i < j, \; i+j \le n$, where $n \ge 3$. Such an algebra does not contain three-dimensional abelian ideals at all. An example with $\G_{\ge}\not=\G_1$ is the Heisenberg algebra defined by
$[X_{2i-1},X_{2i}]=X_{2m+1}, \; i=1, \dots, m, \; n=2m+1 \ge 5$. The two-plane $\sigma=\Span(X_1, X_3)$ (and many others) lies in $\G_{\ge}$, but has a trivial intersection with the centre.
\end{remark}

To find $\G_>$ we use the fact that $\G_> \subset \G_{\ge}$. From Lemma~\ref{L:Gge} and its proof, if $\sigma=\Span(X,Y) \in \G_{\ge}$, then either $\sigma \cap \z \ne 0$ or $[X,Y]=0$ and there exist a nonzero $P \notin \sigma$ and the one-forms $\theta_1, \theta_2 \in \g^*$ such that equation \eqref{E:theta} is satisfied. But in the second case, by \eqref{E:Knonneg} we have $K(X,Y)= \frac14\sum\nolimits_i(\theta_1(e_i)\<X,P\>-\theta_2(e_i)\<Y,P\>)^2$, which vanishes if we choose an inner product in such a way that $\<X,P\>=\<Y,P\>=0$, hence $\sigma \notin \G_>$. In the first case, we can assume that $X \in \z$. By \eqref{E:Knonneg} we get $K(X,Y)=\frac14 \sum_i \<X,[Y,e_i]\>^2$, where $e_i$ is an orthonormal basis for $\g$. This expression is positive, for any choice of the inner product, if and only if $X \in [Y, \g]$. This establishes the theorem for $\G_>$.
\end{proof}

\section{Proof of Theorem~\ref{T:maxmin}}
\label{S:extrema}

As some fragments of the proof of Theorem~\ref{T:maxmin} are rather technical, we start by giving a brief outline. Given a metric nilpotent Lie algebra, there is in general little chance of finding explicitly the vectors on which the Ricci curvature attains its maximum or minimum. To have some control, we start with an arbitrary inner product $\ip$ on $\g$, and then deform it by $\ip \mapsto \< e^{tD}\cdot , \cdot \>$, where $D$ is a diagonal matrix relative to some orthonormal basis for $\ip$; geometrically, we travel along a geodesic in the space of inner products on $\g$, which can be identified with a noncompact Riemannian symmetric space $\R^+ \times \mathrm{SL}(n)/\mathrm{SO}(n)$. The Ricci tensor of the deformed inner product, after scaling, has a limit when $t \to \infty$, for which the eigenvectors with the greatest and the smallest eigenvalues can be found explicitly. Moreover, the projective classes of these eigenvectors belong to $\overline{M}$ and $\overline{m}$ respectively, provided the corresponding eigenspaces are one-dimensional. These computations (done in Lemma~\ref{L:mM} for different choices of $D$) provide us with a supply of elements from $\overline{M}$ and $\overline{m}$ rich enough to prove assertions \eqref{IT:max}\eqref{IT:2step}, \eqref{IT:max}\eqref{IT:codim1} and \eqref{IT:min} of the theorem.

To prove assertion \eqref{IT:max}\eqref{IT:other} we first consider the ``generic nilpotent Lie algebras", which we define by requiring that at least one of the equalities \eqref{E:rk5} or \eqref{E:rk7} below is satisfied for some pair (respectively, triple) of elements from $\g$. The ``non-generic algebras" are classified in Lemma~\ref{L:structure}: they are either two-step nilpotent, or are one-dimensional extensions (central or by a nilpotent derivation) of two-step nilpotent ones. We then reduce the non-generic case to considering a small list of low-dimensional algebras: namely, of five- and six-dimensional nilpotent Lie algebras, which are one-dimensional extensions of two-step nilpotent algebras.

\begin{proof}[Proof of Theorem~\ref{T:maxmin}]
If $\g$ is abelian, then the Ricci curvature of any metric is identically zero by \eqref{E:RicX}, so $\overline{M} = \overline{m} = \mathbb{P}\g$. We will assume for the rest of the proof that $\g$ is nonabelian.

We use the following notation. Given elements $u_i \in \g$, denote $u_{ij}=[u_i,u_j]$ and $u_{ijk}=[u_i,[u_j,u_k]]$. For $k \ge 2, \; \g^k$ is the $k$-th Cartesian power of $\g$, the $k$-fold Cartesian product of $\g$ with itself. For a triple $(X_1,X_2, X_3)\in \g^3$, denote $\mathcal{L}(X_1,X_2,X_3) = \Span(X_1,X_2,X_3,X_{12},X_{23},X_{13})$.

The proof is based on the following key technical lemma, the proof of which will be given in Section~\ref{S:lemma5}. Recall that for a linear space $L$, we denote $\mathbb{P}L$ the projective space of $L$, and $\pi: L \setminus \{0\} \to \mathbb{P}L$ is the natural projection.

\begin{lemma}\label{L:mM}
Let $(\g, \ip)$ be a nonabelian metric nilpotent Lie algebra.
\begin{enumerate}[{\rm (a)}]
  \item \label{IT:e1u2}
  For any orthonormal vectors $e, u_1, u_2 \in \g$ such that the vector
  \begin{equation*}
  T=2\<u_{12},e\>u_{12}+\<u_{212},e\>u_1-\<u_{112},e\>u_2
  \end{equation*}
  is nonzero, $\pi(T) \in \overline{M}$.

%  For vectors $ u_1, u_2 \in \g$ such that $u_{212}\not=0$ and $\<u_{112},u_{212}\>=0$, we have $\pi(u_1) \in \overline{M}$.
  \item \label{IT:eu}
  Let $e_1, e_2, u_1, u_2, u_3 \in \g$ be orthonormal vectors such that
  \begin{equation*}
  \<e_1, u_{13}\>=\<e_1, u_{23}\>=\<e_2, u_{12}\>=\<e_2, u_{23}\>=0, \; \<e_1, u_{12}\>=a \ne 0, \; \<e_2, u_{13}\>=b \ne 0.
  \end{equation*}
  \begin{enumerate}[{\rm (i)}]
    \item \label{IT:eumin} Then $\pi(u_1) \in \overline{m}$.
    \item \label{IT:e2u3}
    Suppose additionally that $|a| > |b|$. Let
    \begin{align*}
    T_1&=2(b\<e_1,u_{212}\>+a\<e_2,u_{312}\>)u_1-3b\<e_1,u_{112}\>u_2-3a\<e_2,u_{112}\>u_3 + 6abu_{12},\\
    T_2&=\tfrac{1}{2a^2+b^2}(a\<e_1,u_{212}\>+b\<e_2,u_{312}\>)u_1-\tfrac{1}{2a}\<e_1,u_{112}\>u_2-\tfrac{b}{a^2+b^2}\<e_2,u_{112}\>u_3 + u_{12}.
    \end{align*}
 Then for  $i=1,2$, we have $\pi(T_i) \in \overline{M}$, provided $T_i \ne 0$.
  \end{enumerate}
  \item \label{IT:2s}
  Suppose $\g$ is two-step nilpotent. For any unit vector $e \in \g'$ and any orthonormal basis $\{u_1, \dots, u_q\}$ for $(\g')^\perp$ for which
  $T=\sum\nolimits_{i,j=1}^q\<e,u_{ij}\>u_{ij} \ne 0$, we have $\pi(T) \in \overline{M}$.
  \end{enumerate}
\end{lemma}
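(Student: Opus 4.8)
The plan is to establish all three parts of Lemma~\ref{L:mM} by the same mechanism: deform the given inner product $\ip$ along a geodesic $g_t(U,V)=\<e^{tD}U,V\>$ in the symmetric space of inner products, compute the rescaled limit of the Ricci operator as $t\to\infty$, and identify its extremal eigenspace. Concretely, fix an orthonormal basis $\{e_i\}$ for $\ip$ that is simultaneously an eigenbasis for the diagonal matrix $D=\mathrm{diag}(\lambda_1,\dots,\lambda_n)$, and recall that $\{E_i=e^{-\lambda_it/2}e_i\}$ is $g_t$-orthonormal. Substituting into \eqref{E:RicXY} shows that, in the basis $\{e_i\}$, the matrix of $\ric_t$ (with respect to $g_t$) is a sum of exponential terms $e^{c_{ij}t}$ with exponents $c_{ij}$ that are explicit linear combinations of the $\lambda_k$'s coming from the structure constants $\<[e_k,e_l],e_m\>$. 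Choosing the $\lambda_k$ so that one exponent strictly dominates all others, we get $\lim_{t\to\infty}e^{-ct}\ric_t = R_\infty$, a symmetric operator whose nonzero entries are exactly the coefficients of the dominant monomial; its largest (resp.\ smallest) eigenvalue and eigenvector can then be read off directly, and the outline preceding the lemma guarantees that the projective class of a one-dimensional extremal eigenspace lies in $\overline M$ (resp.\ $\overline m$).

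For part~\eqref{IT:e1u2} I would complete $e,u_1,u_2$ to an orthonormal basis and pick the weights so that the exponents $2\lambda_{u_1}+2\lambda_{u_2}-2\lambda_e$ (or the appropriate single combination forcing the terms involving $\<u_{12},e\>$, $\<u_{112},e\>$, $\<u_{212},e\>$) strictly dominate. The surviving part of $\ric_t$ after rescaling should act on $\Span(e,u_1,u_2,u_{12})$ — or more precisely should have $T=2\<u_{12},e\>u_{12}+\<u_{212},e\>u_1-\<u_{112},e\>u_2$ as its eigenvector with the top eigenvalue. The computation is the delicate bit: one has to be careful that the chosen ordering of the $\lambda$'s makes the $-\tfrac12\sum\|[X,e_i]\|^2$ negative term in \eqref{E:RicX} and the quartic-looking cross terms in \eqref{E:RicXY} genuinely subdominant, so that only the single rank-adjusted block survives. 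Parts~\eqref{IT:eu}\eqref{IT:eumin} and~\eqref{IT:eu}\eqref{IT:e2u3} are the same idea with a finer weight scheme: here we have two ``resonances'' $\<e_1,u_{12}\>=a$ and $\<e_2,u_{13}\>=b$ and the vanishing hypotheses kill the unwanted brackets, so one arranges $\lambda_{u_1},\lambda_{u_2},\lambda_{u_3},\lambda_{e_1},\lambda_{e_2}$ so that for the minimal-eigenvalue statement the dominant contribution to $\ric_t$ is the (negative) term $-\tfrac12\|[u_1,\cdot]\|^2$ concentrated on $u_1$, giving $\pi(u_1)\in\overline m$; and for part~\eqref{IT:e2u3}, with the extra assumption $|a|>|b|$, one chooses the weights to make the positive quartic term dominate, producing the explicit eigenvectors $T_1,T_2$ as the top eigenvectors (the two formulas presumably correspond to two admissible weight orderings, hence two points of $\overline M$).

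For part~\eqref{IT:2s}, two-step nilpotency means $[\g,\g']=0$, so all the triple-bracket terms vanish identically and \eqref{E:RicXY} simplifies drastically: writing $\g=(\g')^\perp\oplus\g'$ with orthonormal bases $\{u_i\}$ and $\{e_\alpha\}$ respectively, $\ric$ is block-diagonal and on $\g'$ the positive term $\tfrac14\sum\<e,[u_i,u_j]\>^2$ is all that matters while on $(\g')^\perp$ only the negative term survives. Scaling $\g'$ up relative to $(\g')^\perp$ via $D$ (i.e.\ $\lambda_\alpha$ large on $\g'$, $\lambda_i$ small on $(\g')^\perp$), the rescaled limit of $\ric_t$ restricted to $\g'$ is the quadratic form $v\mapsto \tfrac14\sum_{i,j}\<v,u_{ij}\>u_{ij}$, whose top eigenvector — after one further deformation inside $(\g')^\perp$ to make $\<e,\cdot\>$ the dominant direction — is exactly $T=\sum_{i,j}\<e,u_{ij}\>u_{ij}$; hence $\pi(T)\in\overline M$ when $T\neq0$.

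I expect the main obstacle to be purely bookkeeping: verifying in each case that there really is a choice of diagonal weights $\lambda_k$ for which a single exponent strictly dominates \emph{all} the others appearing in \eqref{E:RicXY} (including the ``cross'' exponents that do not obviously respect the filtration), and that the coefficient operator of that dominant monomial has the claimed vector as an eigenvector with the extremal eigenvalue and — crucially — that this eigenspace is one-dimensional, which is precisely the hypothesis ($T\neq0$, or $T_i\neq0$) built into the statement. Establishing that one-dimensionality, and the passage from ``eigenvector of the rescaled limit'' to ``Ricci-extremal point of some genuine inner product'' (which the paragraph before the lemma attributes to a continuity/perturbation argument), is where the real content lies; the exponential algebra itself is routine but must be carried out carefully to pin down the exact formulas for $T_1$ and $T_2$.
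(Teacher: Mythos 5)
Your overall strategy is exactly the paper's: deform $\ip$ along $g_t(U,V)=\<e^{tD}U,V\>$, rescale $\ric_t$ by the dominant exponential, and read off the extremal eigenvectors of the limiting operator $\Phi^0$. The paper implements this with a specific weight scheme ($\lambda=+1$ on $p$ chosen directions, $-1$ on $q$ others, $0$ elsewhere), so that $\Phi^0$ becomes block lower-triangular with diagonal blocks $A_{kl}=\tfrac12\tr(J_kJ_l^t)$ (positive definite), $0$, and $\sum_k J_k^2$ (negative semidefinite); the maximal eigenvalue comes from $A$, the minimal from $\sum_k J_k^2$, and the maximal eigenvector of $\Phi^0$ is not just the top-block vector $Y=\sum_{r,s}(J_1)_{rs}[u_r,u_s]$ but carries a correction $\eta=(\lambda_{\max}I-\sum_kJ_k^2)^{-1}\xi$ in the bottom block --- it is precisely this correction that produces the $u_1,u_2,u_3$ terms in $T$, $T_1$, $T_2$. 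Your outline is consistent with this for parts \eqref{IT:e1u2}, \eqref{IT:eu}\eqref{IT:eumin} and \eqref{IT:2s}, but it stops short both of the computations that constitute the actual content of the lemma and of the argument (semisimplicity of $\ric_t$ with respect to $g_t$, uniform boundedness, extraction of convergent subsequences of unit eigenvectors) needed to convert ``simple extremal eigenvalue of the limit operator $\Phi^0$'' into membership of $\overline{M}$ or $\overline{m}$; you defer both to the outline, so as written the proposal is a plan rather than a proof.

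The one place where the plan, if executed, would fail is $T_1$ in part \eqref{IT:eu}\eqref{IT:e2u3}. You guess that $T_1$ and $T_2$ ``correspond to two admissible weight orderings'', but with $p=2$, $q=3$ the top eigenvalue of $A=\mathrm{diag}(a^2,b^2)$ is forced to be $a^2$ by the hypothesis $|a|>|b|$, so the weight scheme only ever produces $T_2$ (up to the factor $2a$); the other ordering would require $|b|>|a|$ and would in any case yield the formula with the roles of $(e_1,u_2,a)$ and $(e_2,u_3,b)$ interchanged, not $T_1$. The paper obtains $T_1$ by a second, different deformation: declare $E_1=te_1$ to be a unit vector (so the effective value of $a$ becomes $a/t$ while $b$ is unchanged), apply the $T_2$ statement for every admissible $t\in(0,|b^{-1}a|)$, and let $t\to|b^{-1}a|^-$, i.e.\ push to the boundary where the two eigenvalues of $A$ collide; the limit of the corresponding vectors $T_2(t)$ is $\tfrac{1}{6ab}T_1$, which lies in $\overline{M}$ because $\overline{M}$ is closed. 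This degeneration argument is a genuinely missing idea in your proposal.
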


We will also make use of the following lemma whose proof is given in Section~\ref{S:lemmata}.

\begin{lemma}\label{L:notL}
Let $\g$ be a nonabelian nilpotent Lie algebra. Suppose for all $(X_1,X_2,X_3) \in \g^3$, $\dim\mathcal{L}(X_1,X_2,X_3) \le 4$. Then $\g$ is either the direct product of a Heisenberg algebra and a \emph{(}possibly trivial\emph{)}  abelian ideal, or $\g$ is the four-dimensional filiform algebra $\Span(X,Y,Z,W)$ given by the relations $[W,X]=Y, \; [W,Y]=Z$. In particular, $\g$ is either two-step nilpotent or has a codimension one abelian ideal.
\end{lemma}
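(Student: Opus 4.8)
The plan is to prove Lemma~\ref{L:notL} by a structural analysis, starting from the dimension-$4$ bound on $\mathcal{L}(X_1,X_2,X_3)$ and extracting increasingly restrictive consequences for the bracket. First I would dispose of the easy regime: if $\g$ is already two-step nilpotent, then every $X_{ijk}=0$, so $\mathcal{L}(X_1,X_2,X_3)=\Span(X_1,X_2,X_3,X_{12},X_{23},X_{13})$, and the hypothesis says this always has dimension at most $4$; one then checks that a two-step nilpotent algebra with this property is forced to be a Heisenberg algebra times an abelian ideal. Concretely, writing $\g=\V\oplus\g'$ with $\V=(\g')^\perp$ for some fixed inner product, the condition that $\Span(X_1,X_2,X_3)+[\,\cdot\,,\,\cdot\,]$ never exceeds dimension $4$ forces $\dim\g'\le 1$ once $\dim\V\ge 3$ and the bracket is nondegenerate in the appropriate sense; a short argument with a symplectic/skew form on $\V/\ker$ pins down the Heisenberg structure. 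So the substantive work is the non-two-step case.

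So assume $\g$ is not two-step nilpotent. By Lemma~\ref{L:general}\eqref{IT:two} there exist $X,Y$ with $X_{112}=[X,[X,Y]]\ne 0$, and by Lemma~\ref{L:general}\eqref{IT:2br} the four vectors $X,Y,[X,Y],[X,[X,Y]]$ are linearly independent; call this flag $W:=X$ and build $X\supset[W,X]\supset[W,[W,X]]$ — i.e.\ $\g$ contains a filiform-type string of length at least $3$. The idea is to show the string has length exactly $3$ and that nothing else survives. Applying the hypothesis to the triple $(W,X,Z)$ for a \emph{generic} extra vector $Z$ shows that $\mathcal{L}(W,X,Z)$, which already contains the four independent vectors $W,X,[W,X],[W,[W,X]]$, can contain no genuinely new direction: so $Z$, $[W,Z]$, $[X,Z]$ must all lie in $\Span(W,X,[W,X],[W,[W,X]])$ modulo the span of the others. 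Running this for all $Z$ forces $\operatorname{ad}_W$ and $\operatorname{ad}_X$ to have rank essentially $1$ and $\g$ to be spanned (up to an abelian ideal) by $W,X,[W,X],[W,[W,X]]$; pushing the string one step further, $[W,[W,[W,X]]]\in\Span(W,X,[W,X],[W,[W,X]])$ combined with Lemma~\ref{L:general}\eqref{IT:2br} kills it, so $\operatorname{ad}_W^3=0$ on $X$ and the filiform part is exactly four-dimensional with relations $[W,X]=Y$, $[W,Y]=Z$. Finally one argues there is no room for an extra abelian summand (unlike the two-step case): a nonzero central element $C\perp\Span(W,X,Y,Z)$ would, together with a suitable triple, violate the dimension bound — or it would make $\g$ two-step, contradiction. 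This yields exactly the four-dimensional filiform algebra.

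The bookkeeping to watch is that $\mathcal{L}(X_1,X_2,X_3)$ is a span of \emph{six} vectors, so ``dimension $\le 4$'' means two linear relations must always hold; the danger is spurious relations that are automatic (e.g.\ when two of the $X_i$ coincide or are dependent, or when some bracket vanishes) rather than structural. To make the argument clean I would always choose the $X_i$ from a Zariski-open set so that the only relations are the forced structural ones, and then pass to the closure/continuity as in the proofs of Theorems~\ref{T:everymetric} and \ref{T:maxmin}. I expect the main obstacle to be the transition step: proving that once we have the length-$3$ filiform string $W,X,[W,X],[W,[W,X]]$, \emph{every} other vector of $\g$ is constrained to lie in their span modulo a central abelian piece — i.e.\ ruling out a second independent filiform direction or a larger two-step ``core''. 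This is where one must feed in several cleverly chosen triples $(W,X,Z)$, $(W,Z_1,Z_2)$, $(X,Z_1,Z_2)$ and combine the resulting relations with the Jacobi identity; getting all the cases to close without a long enumeration is the delicate part. The last sentence of the lemma (``two-step or codimension-one abelian ideal'') is then immediate: the Heisenberg-times-abelian case is two-step, and the four-dimensional filiform algebra has the abelian ideal $\Span(X,Y,Z)$ of codimension one.
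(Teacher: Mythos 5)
There is a genuine gap at the heart of your non-two-step argument, and it comes from misreading the definition of $\mathcal{L}$. By definition $\mathcal{L}(X_1,X_2,X_3)=\Span(X_1,X_2,X_3,X_{12},X_{23},X_{13})$ contains only the three \emph{pairwise} brackets of its arguments, not iterated brackets. So for a generic extra vector $Z$, the space $\mathcal{L}(W,X,Z)=\Span(W,X,Z,[W,X],[W,Z],[X,Z])$ does \emph{not} ``already contain'' $[W,[W,X]]$, and your central deduction --- that this space holds four forced independent directions and hence that $Z$, $[W,Z]$, $[X,Z]$ are trapped in $\Span(W,X,[W,X],[W,[W,X]])$ --- does not go through. (If it did, it would immediately give $\dim\g\le 4$, which should make you suspicious: nothing in that step used non-two-step-ness, yet $\h_5$ plus an abelian ideal satisfies the hypothesis in arbitrary dimension.) To reach an iterated bracket you must feed a bracket back in as an \emph{argument} of $\mathcal{L}$; this is exactly what the paper does when it applies the hypothesis to $\mathcal{L}(f_1,f_2,f_4)$ with $f_3=[f_1,f_2]$ and $f_4=[f_2,f_3]$. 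The subsequent steps of your sketch (``$\ad_W$ and $\ad_X$ have rank essentially $1$'', ``$\g$ is spanned up to an abelian ideal by the filiform string'') inherit this problem and are not justified.

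For comparison, the paper's route avoids the two-step/non-two-step dichotomy altogether: it splits on $\dim\g'$ and $\operatorname{codim}\g'$. If $\dim\g'=1$ one gets Heisenberg times abelian for free; if $\operatorname{codim}\g'>2$ one chooses $X,Y,Z$ with $\Span(X,Y,Z)\cap\g'=0$ and uses only first-order brackets in $\mathcal{L}(X,Y,Z)$ to force $[X,Z]\parallel[X,Y]$ and ultimately $\dim\g'=1$, a contradiction; and if $\operatorname{codim}\g'\le 2$ the algebra has two generators $f_1,f_2$, and the hypothesis applied to triples containing $f_3=[f_1,f_2]$ and $f_4=[f_2,f_3]$ pins down the four-dimensional filiform algebra (in particular $\dim\g=4$ comes out of the two-generator structure, not from a dimension count on a single $\mathcal{L}$). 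Your closing observations --- that a central element outside the filiform span would produce a bad triple such as $(W,X,Y+C)$, and that the final sentence of the lemma follows immediately --- are correct, but the bridge from ``not two-step'' to ``four-dimensional filiform'' is the part that needs a sound mechanism for accessing triple brackets, and as written your proposal does not have one.
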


Returning to the proof of Theorem~\ref{T:maxmin}, we consider the various parts:

\medskip

\eqref{IT:max}\eqref{IT:2step}
If $\g$ is two-step nilpotent, then by (\ref{E:RicXY}, \ref{E:RicX}), for any inner product $\ip$ on $\g$ and for any $Y \in (\g')^\perp, \; X \in \g'$, we have $\Ric(Y) \le 0, \; \Ric(X,Y)=0$ and $\Ric(X) \ge 0$, and furthermore, $\Ric(X) > 0$ for some $X \in \g'$, as $\g$ is nonabelian. It follows that $\g'$ and $(\g')^\perp$ are invariant subspaces of the linear map $\ric$, and the maximum of $\Ric$ on the unit sphere of $(\g, \ip)$ is attained on some vector from $\g'$, so $\overline{M} \subset \mathbb{P}\g'$. To prove the converse, fix an inner product $\ip$ on $\g$ and an orthonormal basis $\{u_1,\dots,u_q\}$ for $(\g')^\perp$. Define $\psi \in \End(\g')$ by $\psi(e)=\sum\nolimits_{i,j=1}^q\<e,u_{ij}\>u_{ij}$, for $e \in \g'$. By Lemma~\ref{L:mM}\eqref{IT:2s}, $\pi(\psi(e)) \in \overline{M}$, if $\psi(e) \ne 0$. Now, as $\<\psi(e),e\>=\sum\nolimits_{i,j=1}^q\<e,u_{ij}\>^2$ and as $\g'$ is spanned by the $u_{ij}$'s, the vector $\psi(e)$ is nonzero if $e \ne 0$. So $\ker\psi=0$, and hence $\psi$ is surjective. Then $\overline{M} \supset \mathbb{P}(\psi(\g'))= \mathbb{P}\g'$, as required.

\medskip

\eqref{IT:max}\eqref{IT:codim1}
Suppose $\g=\R c \oplus \a$, where $\a$ is a codimension one abelian ideal. The fact that $\g$ is not two-step nilpotent means that $\ad_c^2(\a) \ne 0$. We first prove that $\overline{M} \supset \mathbb{P}\a$. Take a vector $u_1 \in \a$ such that $v:=[c,[c,u_1]] \ne 0$ (such vectors $u_1$ form an open, dense subset of $\a$).  By Lemma~\ref{L:general}, the vectors $c,u_1, [c,u_1], v$ are linearly independent. Choose an inner product on $\g$ for which $c,u_1,  [c,u_1],e$ are orthonormal. In Lemma~\ref{L:mM}\eqref{IT:e1u2}, take $u_2=c$ and $e \perp \Span(c,u_1, [c,u_1])$, but $\<e, v\> \ne 0$. Then $u_{112}=0$, so we have $\pi(u_1) \in \overline{M}$, hence $\overline{M} \supset \mathbb{P}\a$. To prove the converse, suppose $\ip$ is an arbitrary inner product on $\g$, and $c'$ is a unit vector orthogonal to $\a$. By (\ref{E:RicXY}, \ref{E:RicX}), $\Ric(c')<0$ and $\Ric(c',X)=0$, for all $X \in \a$. It follows that any eigenvector of $\ric$ with maximal eigenvalue (which must be positive by \cite[Theorem~2.4]{Mi}) is orthogonal to $c'$ and hence belongs to $\a$. So $\overline{M} \subset \mathbb{P}\a$.

\medskip

\eqref{IT:min}
Suppose there exists $(X_1,X_2,X_3) \in \g^3$ such that $\dim \mathcal{L}(X_1,X_2,X_3) > 4$. Then $\dim \mathcal{L}(X_1,X_2,X_3) > 4$ for almost all $(X_1,X_2,X_3) \in \g^3$. Denote $L_3=\Span(X_1,X_2,X_3)$ (note that $\dim L_3=3$). As $\dim \mathcal{L}(X_1,X_2,X_3)/L_3 \ge 2$, we can choose a two-plane $\sigma \subset \mathcal{L}(X_1,X_2,X_3)$ such that $\dim(L_3 + \sigma) = 5$, and then define an inner product $\ip$ on $\g$ in such a way that $L_3 \perp \sigma$ (we will later specify it further). Consider the linear map $\psi: \Lambda^2 L_3 \to \sigma, \; \psi(X \wedge Y):=\pi_{\sigma}[X,Y]$. %\<e_1,[X,Y]\>e_1+\<e_2,[X,Y]\>e_2
The map $\psi$ is well-defined and surjective, as for $e \in \sigma$ orthogonal to $\psi(\Lambda^2 L_3)$ we would have had $e \perp \mathcal{L}(X_1,X_2,X_3)$. As all the elements of $\Lambda^2 L_3$ are decomposable, $\ker \psi = \R (U \wedge V)$, for some linearly independent $U, V \in L_3$. Denote $L_2=\Span(U,V)$. Take a vector $u_1 \in L_3 \backslash L_2$ and two linearly independent vectors $u_2, u_3 \in L_2$. Then the vectors $e_1=\psi(u_1\wedge u_2)$ and $e_2=\psi(u_1\wedge u_3)$ are linearly independent and span $\sigma$. We now specify the inner product $\ip$ further by requiring the vectors $u_i, e_j$ to be orthonormal. Then the vectors $u_i,e_j$ satisfy the assumptions of Lemma~\ref{L:mM}\eqref{IT:eu}, so by Lemma~\ref{L:mM}\eqref{IT:eu}\eqref{IT:eumin}, $\pi(u_1) \in \overline{m}$. Since $u_1 \in L_3\backslash L_2$ was arbitrary, it follows that $\mathbb{P}L_3 \subset \overline{m}$. As this is satisfied for almost all $L_3=\Span(X_1,X_2,X_3)$, we get $\overline{m}=\mathbb{P}\g$, as required.

Now suppose that $\dim \mathcal{L}(X_1,X_2,X_3) \le 4$ for any triple of vectors $(X_1,X_2,X_3) \in \g^3$. By Lemma~\ref{L:notL}, this could only happen when $\g$ is either the direct product of a Heisenberg algebra and a (possibly trivial) abelian algebra, or is the four-dimensional filiform algebra $\Span(W,X,Y,Z)$ given by $[W,X]=Y, \; [W,Y]=Z$.

In the second case, choose the inner product such that the vectors $E_1=W+aX+bY+cZ, \; E_2=X, \; E_3=Y$, $E_4=Z$ are orthonormal (with arbitrary $a,b,c \in \R$). Then $[E_1,E_2]=E_3, \; [E_1, E_3]=E_4$ and a direct computation shows that, relative to the basis $\{E_i\}$, the Ricci operator is diagonal, with the diagonal entries $-1, -\frac12, 0, \frac12$, respectively. It follows that $m \ni \pi(E_1)=\pi(W+aX+bY+cZ)$, so $\overline{m}=\mathbb{P}\g$.

If $\g$ is the direct product of a Heisenberg algebra given by $[X_{2i-1},X_{2i}]=X_{2l+1}, \; i=1, \dots, l$, and an abelian algebra $\a=\Span(X_{2l+2},\dots,X_n)$, choose the inner product in such a way that the vectors
\begin{equation*}
E_i=\begin{cases}
X_i+Z_i+a_iX_{2l+1}&:\ \text{for}\ i=1, \dots ,2l\\
X_i&:\ \text{for}\ i=2l+1, \dots ,n,
\end{cases}
\end{equation*}
are orthonormal, where $Z_i \in \a$ and $a_i \in \R$ are arbitrary. Then the relations for the $E_i$'s are the same as those for the $X_i$'s and a direct computation shows that, relative to the basis $\{E_i\}$, the Ricci operator is diagonal, with $\Ric(E_i)=-\frac12$ for $i=1, \dots, 2l$, $\Ric(E_{2l+1}) =\frac{l}2$, and $\Ric(E_j)=0$ for $j=2l+2, \dots, n$. It follows that every nontrivial linear combination of $E_1, \dots, E_{2l}$ is an eigenvector of $\ric$ with the smallest eigenvalue. Choosing $Z_i$ and $a_i$ arbitrarily we obtain $\overline{m}=\mathbb{P}\g$.

\medskip

\eqref{IT:max}\eqref{IT:other} ``generic case".
We show that $\overline{M} =\mathbb{P}\g$ for every algebra $\g$ satisfying one of the open conditions \eqref{E:rk5} or \eqref{E:rk7} below.

Suppose that there exist vectors $X_1, X_2 \in \g$ such that
\begin{equation}\label{E:rk5}
\rk(X_1,X_2, X_{12}, X_{112},X_{212})=5.
\end{equation}
If condition~\eqref{E:rk5} is satisfied for at least one pair $(X_1, X_2) \in \g^2$, then it is satisfied for almost all pairs $(X_1, X_2) \in \g^2$. Choose one such pair and define an inner product on $\g$ in such a way that the five vectors from \eqref{E:rk5} are orthonormal. By Lemma~\ref{L:mM}\eqref{IT:e1u2} with $u_1=X_1, \, u_2=X_2, \, e=X_{212}$, we have $\pi(X_1)\in \overline{M}$, which implies $\overline{M} =\mathbb{P}\g$.

Suppose that there exist vectors $X_1, X_2, X_3 \in \g$ such that
\begin{equation}\label{E:rk7}
\rk(X_1,X_2, X_3,X_{12}, X_{13},X_{23},X_{312})=7.
\end{equation}
As before, if \eqref{E:rk7} is satisfied for at least one triple $(X_1, X_2, X_3) \in \g^3$, then it is satisfied for almost all of them. Choose one such triple and define an inner product on $\g$ in such a way that the seven vectors from \eqref{E:rk7} are orthonormal. Then the vectors $u_i=X_i, \; e_1=X_{12}$ and $e_2=X_{13} \cos \tau+X_{312}\sin \tau$, $\tau \in(0,\frac\pi2)$, are orthonormal and satisfy the hypothesis of  Lemma~\ref{L:mM}\eqref{IT:eu}\eqref{IT:e2u3}, with $a=1, \; b=\cos \tau$. By that assertion, if the vector $T_1=2(\<X_{12},X_{212}\>\cos \tau +\sin \tau) X_1 - 3\<X_{12},X_{112}\> \cos \tau X_2- 3(\<X_{13},X_{112}\>\cos \tau + \<X_{312},X_{112}\>\sin \tau) X_3+6\cos \tau X_{12}$ is nonzero, then $\pi(T_1) \in \overline{M}$. Taking the limit as $\tau \to \frac\pi2$ we obtain
\begin{equation}\label{E:312112}
\pi(2 X_1 -3\<X_{312},X_{112}\> X_3)\in \overline{M}.
\end{equation}
Now, if $X_{112}$ does not belong to the span of the seven vectors from \eqref{E:rk7}, we could additionally assume that the inner product is chosen in such a way that $X_{112}$ is orthogonal to them. Then $\pi(X_1)\in \overline{M}$, by \eqref{E:312112}. If $X_{112}$ belongs to the span of the seven vectors from \eqref{E:rk7}, then $X_{112}-\mu X_{312} \in \mathcal{L}(X_1,X_2,X_3)$, for some $\mu \in \R$, and so $\<X_{312},X_{112}\>=\mu$. Thus $\pi(2 X_1 -3 \mu X_3)\in \overline{M}$. If $\mu=0$ we have  $\pi(X_1)\in \overline{M}$. Assume $\mu\not=0$, and replace the triple $X_1,X_2,X_3$ by the triple $X_1,X_2, X_3(t)=X_3+tX_1$. This does not violate condition \eqref{E:rk7} provided $t\not=-\mu^{-1}$. Set $\mu(t):=(1+\mu t)^{-1}\mu$. Then
\begin{equation*}
X_{112}-\mu(t) [X_3(t),[X_1,X_2]] = \frac{1}{1+\mu t}(X_{112} -\mu X_{312}) \in \mathcal{L}(X_1,X_2,X_3).
\end{equation*}
Thus $\mu(t)$ plays the same role for $X_1,X_2, X_3(t)$ as $\mu$ did for $X_1,X_2, X_3$. It follows that
\begin{equation*}
\overline{M}\ni \pi(2 X_1 -3 \mu(t) X_3(t))=\pi\Big(\frac{2-\mu t}{1+\mu t} X_1 - \frac{3\mu}{1+\mu t} X_3\Big).
\end{equation*}
Taking the limit as $t$ tends to infinity, we obtain $\pi(X_1)\in \overline{M}$. So for almost all triples $(X_1, X_2, X_3) \in \g^3$, we have $\pi(X_1)\in \overline{M}$, which implies $\overline{M} =\mathbb{P}\g$.

\medskip

\eqref{IT:max}\eqref{IT:other} ``non-generic case".
To complete the proof of the theorem, it remains to consider the algebras $\g$ for which both conditions \eqref{E:rk5} and \eqref{E:rk7} are violated, but which are not two-step nilpotent and do not contain a codimension one abelian ideal. As one may expect, these conditions are very restrictive, which is confirmed by the following lemma whose proof we postpone till Section~\ref{S:lemmata}.

\begin{lemma}\label{L:structure}
Let $\g$ be a nilpotent Lie algebra, which is nonabelian and not two-step nilpotent. Suppose that both conditions \eqref{E:rk5} and \eqref{E:rk7} are violated, for all pairs \emph{(}respectively, triples\emph{)} of vectors from $\g$. Then $\g$ belongs to one of the classes \eqref{IT:b} or \eqref{IT:c} below.
  \begin{enumerate}[{\rm (A)}]
    \item \label{IT:b}
    $\g$ is a one-dimensional central extension of a two-step nilpotent Lie algebra $\h$ by a cocycle $\omega$ with the following property: for almost all $X \in \h$, there exists $Y \in \h$ with $\omega(X,[X,Y]_\h)=0$ and $\omega(Y,[X,Y]_\h) \ne 0$.
    \item \label{IT:c}
    $\g$ is a one-dimensional extension of a two-step nilpotent ideal $\h \subset \g$ by a nilpotent derivation $D$ of $\h$ such that $[DX,X]=0$, for all $X \in \h$.
  \end{enumerate}
Furthermore, suppose that $\g$ belongs to class \eqref{IT:c}. Then there exists $N \in \{5, 6\}$ such that for almost all $(X_1,X_2,X_3) \in \g^3$, the subspace $L:=\mathcal{L}(X_1,X_2,X_3)$ is a subalgebra of $\g$ of dimension $N$ isomorphic to one of the following algebras:
\begin{enumerate}[{\rm (a)}]
  \item \label{IT:five}
  If $N=5$, then $L\cong\Span(c,X,Y,Z,A)$ defined by the relations
  \begin{equation*}
  [c,X]=A, \; [c,A]=Z, \; [X,Y]=Z.
  \end{equation*}

  \item \label{IT:six}
  If $N=6$, then $L\cong\Span(c,X,Y,Z,A_1,A_2)$ defined by one of the following sets of relations:
  \begin{gather}\label{E:dimsix1}
  [c,X]=A_1, \; [c,A_1]=A_2, \; [X,Y]=Z;\\
  [c,X]=A_1, \; [c,Y]=A_2, \; [c,A_1]=Z, \; [X,Y]=Z; \label{E:dimsix2}\\
  [c,X]=A_1, \; [c,A_1]=A_2, \; [c,A_2]=Z, \; [X,Y]=Z.\label{E:dimsix3}
  \end{gather}
\end{enumerate}
\end{lemma}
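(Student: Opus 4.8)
The plan is to deduce from the two rank hypotheses that $\g$ must be a one-dimensional extension of a two-step nilpotent Lie algebra, and then to run a finite enumeration which produces the normal forms listed in class~\eqref{IT:c}.

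\emph{Step 1: $\g$ is one dimension away from being two-step.} Since $\g$ is not two-step, Lemma~\ref{L:general}\eqref{IT:two} produces a pair $X,Y$ with $[X,[X,Y]]\neq 0$, so $U=\{c\in\g:\ad_c^2\neq 0\}$ is the complement of a proper Zariski-closed cone and is therefore open and dense. Fix a generic $c\in U$ and put $X_1=c$ in the negation of \eqref{E:rk5}: for every $Y\in\g$ the vectors $c,Y,[c,Y],[c,[c,Y]],[Y,[c,Y]]$ are linearly dependent. For $Y$ in a dense open subset one has $[c,[c,Y]]\neq 0$, and then $c,Y,[c,Y],[c,[c,Y]]$ are independent by Lemma~\ref{L:general}\eqref{IT:br},\eqref{IT:2br}; since the containment $[Y,[c,Y]]\in\Span(c,Y,[c,Y],[c,[c,Y]])$ is a Zariski-closed condition on $Y$, it holds for \emph{all} $Y$. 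Call this relation $(\ast)$. In the same way, the negation of \eqref{E:rk7} yields, whenever $\mathcal L(X_1,X_2,X_3)$ is $6$-dimensional, that $[X_3,[X_1,X_2]]\in\mathcal L(X_1,X_2,X_3)$, which controls the genuinely mixed second bracket. Combining $(\ast)$ with this — and noting that $(\ast)$ already forces $\operatorname{rank}\ad_c^2$ to be small — one shows that for generic $c$ the nilpotent operator $\ad_c$ has, apart from blocks of size at most $2$, a single Jordan string sitting above a two-step part, hence that $\g$ is a one-dimensional extension of a two-step Lie algebra: a central extension if the extra direction lies in $\z$, giving class~\eqref{IT:b}, and otherwise an extension of a two-step ideal $\h$ by the nilpotent derivation $D=\ad_c|_\h$, giving class~\eqref{IT:c}. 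The cocycle condition required in \eqref{IT:b} and the identity $[DX,X]=0$ required in \eqref{IT:c} are just $(\ast)$ re-read in the corresponding coordinates.

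\emph{Step 2: normal forms in class~\eqref{IT:c}.} Assume $\g=\R c\oplus\h$ with $\h$ a two-step nilpotent ideal and $D=\ad_c|_\h$ a nilpotent derivation with $[DX,X]=0$. For a generic triple write $X_i=\lambda_i c+h_i$ with $h_i\in\h$. As $\h$ is two-step and $D$ is a derivation, every $[X_i,X_j]$ and every $[X_i,[X_j,X_k]]$ is a linear combination of the $h_i$, the $[h_i,h_j]_\h$, and their $D$-iterates; a short verification then shows that $L:=\mathcal L(X_1,X_2,X_3)$ is a subalgebra of $\g$. Being spanned by six vectors, $\dim L\le 6$; and since $\g$ is neither two-step nor has a codimension-one abelian ideal, Lemma~\ref{L:notL} rules out $\dim L\le 4$. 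As $\dim L$ takes its generic value $N$ on a dense open set, $N\in\{5,6\}$. It remains to enumerate: $L$ is generated by the single element $c$ acting on a two-step ideal by a nilpotent derivation with $[DX,X]=0$, and $[L,L]$ is spanned by finitely many $X_{ij}$ and their $D$-iterates; going through the admissible Jordan strings of $D$ that fit into dimension $5$ or $6$, subject to the two-step constraint on the ideal and to $[DX,X]=0$, leaves exactly the algebra \eqref{IT:five} when $N=5$ and the three algebras \eqref{E:dimsix1}, \eqref{E:dimsix2}, \eqref{E:dimsix3} when $N=6$, each of which is directly checked to be a subalgebra of the stated form.

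\emph{Main obstacle.} The hard part is Step~1. Both hypotheses are ``pointwise'' — one linear dependence for each pair, respectively each triple, of vectors — and converting them into the rigid conclusion that $\g$ is a one-dimensional extension of a two-step algebra requires a careful choice of generic vectors, repeated use of Lemma~\ref{L:general} and of Zariski density to pass from ``for generic $Y$'' to ``for all $Y$'', and a genuine interplay between \eqref{E:rk5}, which only sees second brackets formed from a single pair, and \eqref{E:rk7}, which is needed to preclude a second long $\ad_c$-string by controlling the mixed bracket $[Z,[X,Y]]$. Pinning down the dichotomy between classes \eqref{IT:b} and \eqref{IT:c} and extracting the cocycle and derivation conditions also belong to this step; by contrast, the enumeration in Step~2 is routine once the structure is in hand.
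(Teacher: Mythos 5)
The core of your Step~1 is missing. The assertion that $(\ast)$ together with the negation of \eqref{E:rk7} ``shows that for generic $c$ the nilpotent operator $\ad_c$ has \ldots a single Jordan string sitting above a two-step part, hence that $\g$ is a one-dimensional extension of a two-step Lie algebra'' is precisely the hard content of the lemma, and no argument is supplied. Two concrete problems. First, your relation $(\ast)$ is strictly weaker than what is needed: from the failure of \eqref{E:rk5} one must extract the \emph{exact} proportionality $[X,[X,Y]]\parallel[Y,[Y,X]]$ (the component in $\Span(X,Y,[X,Y])$ actually vanishes), which the paper obtains by applying Lemma~\ref{L:general}\eqref{IT:2br} to $\tilde X=X+aY$; the mere containment $(\ast)$, stated for a single generic $c$, does not control the brackets $[X,[X,Y]]$ for $X$ outside the line $\R c$, and the Jordan structure of one operator $\ad_c$ says nothing about whether some complementary codimension-one subspace $\h$ satisfies $[\h,[\h,\h]]=0$. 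The paper's actual route is algebraic: write the components of $[X,[X,Y]]$ as bihomogeneous polynomials, use the proportionality to factor $[X,[X,Y]]=h(X,Y)Q(X,Y)$ with $h$ a nonconstant common divisor, and show by bidegree bookkeeping that $h$ is either a linear form in $X$ (giving class \eqref{IT:c}, with $\h=\ker h$ two-step by the polarised identity \eqref{E:bracket}) or of bidegree $(2,1)$, in which case $Q$ is a constant central vector and one lands in class \eqref{IT:b}. Your description of the dichotomy (``central extension if the extra direction lies in $\z$'') also mislocates the structure: in class \eqref{IT:b} the distinguished line is the central direction spanned by the value of the triple bracket, not the generic $c$ with $\ad_c^2\ne 0$ fixed at the start of your Step~1. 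Moreover, \eqref{E:rk7} plays no role in establishing the dichotomy at all (the paper uses it only later, to show $L$ is a subalgebra when $N=6$), so the ``interplay'' you invoke is not where the difficulty lies. Finally, the cocycle property in \eqref{IT:b} --- for almost all $X$ there is $Y$ with $\omega(X,[X,Y]_\h)=0$ and $\omega(Y,[X,Y]_\h)\ne 0$ --- is not ``$(\ast)$ re-read in coordinates''; it is a genuinely separate statement (needed later to apply Lemma~\ref{L:mM}\eqref{IT:e1u2}) whose proof occupies most of a page in the paper, via the identity $f(X)S_X=T(X)\otimes v(X)^*+v(X)\otimes T(X)^*$ and a divisibility analysis.

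Step~2 also elides a real difficulty: when $N=5$ the failure of \eqref{E:rk7} is vacuous, and proving that $L=\mathcal{L}(X_1,X_2,X_3)$ is a subalgebra requires a further idea --- the paper deforms $W$ to $W_t=\Span(c,X,Y+tDX)$ to force $D^2X\in[W,W]$ --- so ``a short verification'' does not cover it. The enumeration of normal forms, once $L$ is known to be a five- or six-dimensional subalgebra of class \eqref{IT:c}, is indeed the routine part, but as written the proposal establishes neither the dichotomy nor the subalgebra property on which that enumeration rests.
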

% do we include abelian in two-step?

We now separately examine the Lie algebras of classes \eqref{IT:b} and \eqref{IT:c}.

\medskip

\emph{Algebras of class \eqref{IT:b}}. Let $\g$ be a one-dimensional central extension of a two-step nilpotent Lie algebra $\h$ by a cocycle $\omega$, so
that $\g=\h \oplus \R c$, with the Lie bracket defined by $[c,\g]=0$ and $[X,Y]=[X,Y]_\h+\omega(X,Y)c$, for $X,Y \in\h$, and furthermore, for almost all $X \in \h$, there exists $Y \in \h$ with $[X,[X,Y]]=0, \; [Y,[X,Y]]= \gamma c, \; \gamma:=\omega(Y,[X,Y]_\h)\ne 0$. Choose any two such $X, Y$ and any $\alpha \in \R$ and denote $u_1=X+\alpha c, \; u_2= Y$. Then $u_{112}=0, \; u_{212}= \gamma c \ne 0$, hence the vectors $u_1, u_2, u_{12}, u_{212}$ are linearly independent by Lemma~\ref{L:general}\eqref{IT:2br}. Choose an inner product $\ip$ for $\g$ in such a way that they are orthonormal and take $e=u_{212}$. Then $\pi(u_1) \in \overline{M}$ by Lemma~\ref{L:mM}\eqref{IT:e1u2}, so $\pi(X+ \alpha c) \in \overline{M}$ for almost all $X \in \h$ and all $\alpha \in \R$. Therefore $\overline{M} =\mathbb{P}\g$, as required.

\medskip

\emph{Algebras of class \eqref{IT:c}}. In the both cases \eqref{IT:five} (when $N=5$) and \eqref{IT:six} (when $N=6$) we will show that $\mathbb{P}L \subset \overline{M}$ for almost all triples $(X_1, X_2, X_3) \in \g^3$, which will then imply $\overline{M}=\mathbb{P}\g$, as required. We consider these two cases separately.

\smallskip

Case~\eqref{IT:five}: $N=5$. For almost all triples $(X_1, X_2, X_3) \in \g^3$, the subspace $L$ is a subalgebra of $\g$ isomorphic to the algebra
$\Span(c,X,Y,Z,A)$ defined by the relations $[c,X]=A, \; [c,A]=Z, \; [X,Y]=Z$. For nonzero reals $\alpha_1,\alpha_2,\alpha_3$, define an inner product on $\g$ such that the vectors
\begin{align*}
u_1&:=6\alpha_1 c+\alpha_2 X, \quad u_2:= c, \quad u_3:=10\alpha_1 c+\alpha_3 Y, \\
e_1 &:= u_{12}=-\alpha_2 A,
\quad e_2 := \sqrt{2} u_{13}=\sqrt{2} (-10\alpha_1\alpha_2 A+\alpha_2\alpha_3 Z)
\end{align*}
are orthonormal. Using the fact that $u_{23} = 0$, it is easy to verify that the assumptions of Lemma~\ref{L:mM}\eqref{IT:eu}\eqref{IT:e2u3} are satisfied. The choice of vectors $u_1,u_2,u_3,e_1,e_2$ has been made so that, as a direct computation shows, one has
\begin{align*}
T_2&=\tfrac{1}{2a^2+b^2}(a\<e_1,u_{212}\>+b\<e_2,u_{312}\>)u_1 -\tfrac{1}{2a}\<e_1,u_{112}\>u_2-\tfrac{b}{a^2+b^2}\<e_2,u_{112}\>u_3 + u_{12}\\
&=2\alpha_1\alpha_3^{-1} (\alpha_1 c+\alpha_2 X+\alpha_3 Y-\tfrac12 \alpha_1^{-1}\alpha_2 \alpha_3 A).
\end{align*}
Lemma~\ref{L:mM}\eqref{IT:eu}\eqref{IT:e2u3} then gives $\pi(\alpha_1 c+\alpha_2 X+\alpha_3 Y-\tfrac12 \alpha_1^{-1}\alpha_2 \alpha_3 A) =\pi (T_2) \in \overline{M}$. Now for arbitrary reals $\beta_1, \beta_2$, the linear map $\phi$ on $L$ which is the identity on $\Span(X,Y,A,Z)$ and such that $\phi(c) = c + \alpha_1^{-1} (\beta_1 A + \beta_2 Z)$ is an automorphism of $L$. Although $\phi$ may not extend to an automorphism of the entire algebra $\g$, we can replace the basis vectors $u_i, e_i$ defined above by their images under $\phi$ and consider an inner product $\ip_{\phi}$ on $\g$, for which they are orthonormal. Then the assumptions of Lemma~\ref{L:mM}\eqref{IT:eu}\eqref{IT:e2u3} are again satisfied and we obtain that $\overline{M} \ni \pi(\alpha_1 c + \alpha_2 X + \alpha_3 Y + (\beta_1 -\tfrac12 \alpha_1^{-1}\alpha_2 \alpha_3) A +\beta_2 Z)$. It follows that $\pi(L) \subset \overline{M}$, as required.

\smallskip

Case~\eqref{IT:six}: $N=6$. For almost all triples $(X_1, X_2, X_3) \in \g^3$, the subspace $L$ is a subalgebra of $\g$ isomorphic to one of the three algebras (\ref{E:dimsix1},\ref{E:dimsix2},\ref{E:dimsix3}). We treat all three algebras simultaneously. For nonzero reals $\alpha_1,\alpha_2,\alpha_3$, we choose $u_1,u_2,u_3$  as shown in the Table below. We choose the inner product on $\g$ for which the vectors $u_1,u_2,u_3,e_1:=u_{12},e_2:=2u_{13}$ and $u_{23}$ are orthonormal.

\begin{table}[h]
\begin{tabular}{|c|l|l|l|}\hline
Algebra&$u_{1}$&$u_{2}$&$u_{3}$ \\\hline
\eqref{E:dimsix1} and \eqref{E:dimsix3}&$-2\alpha_1 c+\alpha_2 X+\alpha_3 Y$&$ X$ &$ c+ A_1$\\\hline %$\g_{6,2}$ and $\g_{6,19}$
\eqref{E:dimsix2}&$\alpha_1 c-\alpha_2 X$&$ c$&$-6\alpha_1 c-11\alpha_2 X-\alpha_3 Y$\\\hline %$\g_{6,20}$
\end{tabular}
%\medskip
%\caption{Algebras treated using Lemma~\ref{L:mM}\eqref{IT:e2u3}}\label{Table3}
\end{table}

For algebras \eqref{E:dimsix1} and \eqref{E:dimsix3} we obtain
\begin{equation*}
T_1=\alpha_1 c+\alpha_2 X+\alpha_3 Y -3 \alpha_1  A_1 - 3 \alpha_3  Z.
\end{equation*}
while for \eqref{E:dimsix2},
\begin{equation*}
T_2=\tfrac15 \alpha_1 \alpha_3^{-1}(\alpha_1 c+\alpha_2 X+\alpha_3 Y)+\alpha_2 A_1.
\end{equation*}
By Lemma~\ref{L:mM}\eqref{IT:eu}\eqref{IT:e2u3} we have $\pi(T_1), \pi (T_2) \in \overline{M}$. Notice that the subspace $\Span(A_1, A_2, Z)$ is the centre of the codimension one ideal $\ig=\Span(X, Y, A_1, A_2, Z)$, for each of the algebras (\ref{E:dimsix1},\ref{E:dimsix2},\ref{E:dimsix3}). It follows that any linear map $\phi$ on $L$ which is the identity on $\ig$ and such that $\phi(c) = c+ U$, for an arbitrary $U\in \Span(A_1, A_2, Z)$, is an automorphism of $L$. Although
$\phi$ may not extend to an automorphism of the entire algebra $\g$, we can replace the basis vectors $u_i, e_i,u_{23}$ defined above by
their images under $\phi$ and consider an inner product $\ip_{\phi}$ on $\g$, for which they are orthonormal. Then the assumptions of Lemma~\ref{L:mM}\eqref{IT:eu}\eqref{IT:e2u3} are again satisfied. Consequently $\overline{M} \ni \pi(\phi(T_i))=\pi(\alpha_1 c+\alpha_2 X+\alpha_3 Y + U)$ for all $U \in \Span(A_1, A_2, Z)$, and hence $\mathbb{P}L \subset \overline{M}$, as required.
\end{proof}

\section{Proof of Lemma~\ref{L:mM}}
\label{S:lemma5}

Choose an arbitrary inner product $\ip$ on $\g$, with an orthonormal basis $\{e_i\}$. From \eqref{E:RicX}, for $X \in \g$,
\begin{equation}\label{E:ricX}
\begin{split}
    \ric X&=\sum\nolimits_{ijk}\<e_k,[e_i,e_j]\>(\tfrac14 \<e_k,X\>[e_i,e_j]-\tfrac12\<e_k,[e_i,X]\>e_j)\\
    &=\tfrac12\sum\nolimits_{ijk, \, i>j}\<e_k,[e_i,e_j]\>(\<e_k,X\>[e_i,e_j]-\<e_k,[e_i,X]\>e_j+\<e_k,[e_j,X]\>e_i).
\end{split}
\end{equation}
Consider a one-parameter deformation $g_t$ of the inner product defined by $g_t(X,Y)=\<e^{Dt}X,Y\>$, where $D=\mathrm{diag}(\lambda_1, \dots, \lambda_n$) is a diagonal matrix relative to the basis $\{e_i\}$. Then $g_t(e_i, X)=e^{\lambda_it}\<e_i,X\>$ and the basis $\{E_i=e^{-\lambda_it/2}e_i\}$ is orthonormal for
the inner product $g_t$. From \eqref{E:ricX}, for the Ricci operator $\ric_t$ of the inner product $g_t$, we get
\begin{equation*}
    \ric_t X =\tfrac12\sum\nolimits_{ijk, \, i>j}e^{(\lambda_k-\lambda_i-\lambda_j)t}\<e_k,[e_i,e_j]\>
    (\<e_k,X\>[e_i,e_j]-\<e_k,[e_i,X]\>e_j+\<e_k,[e_j,X]\>e_i),
\end{equation*}
for any $X \in \g$, so
\begin{equation}\label{E:ricexp}
    \ric_t =\tfrac12\sum\nolimits_{ijk, \, i>j}e^{(\lambda_k-\lambda_i-\lambda_j)t}\<e_k,[e_i,e_j]\>
    ([e_i,e_j]\otimes e_k^* -e_j\otimes e_k^* \ad_{e_i}+e_i\otimes e_k^* \ad_{e_j}),
\end{equation}
where for $X, Y \in \g$, the operator $X \otimes Y^* \in \End(\g)$ is defined by $(X \otimes Y^*)Z=\<Y,Z\>X$.

Let $\Omega=\{(i,j,k) \, : \, 1 \le i,j,k \le n, \; i>j\}$ and let $d=\max_{(i,j,k) \in \Omega}(\lambda_k-\lambda_i-\lambda_j)$ and $\Lambda=\{(i,j,k) \in \Omega \, : \, \lambda_k-\lambda_i-\lambda_j=d\}$. The eigenvectors corresponding to the maximal (minimal) eigenvalues of the operator $\Phi_t=2e^{-td}\ric_t$ are the same as that of $\ric_t$. Taking the limit when $t \to \infty$ we get
\begin{equation}\label{E:Phi0}
    \Phi^0=\lim_{t\to\infty}\Phi_t=\sum\nolimits_{(i,j,k) \in \Lambda} \<e_k,[e_i,e_j]\>
    ([e_i,e_j]\otimes e_k^* -e_j\otimes e_k^* \ad_{e_i}+e_i\otimes e_k^* \ad_{e_j}).
\end{equation}
Suppose the eigenspace of the operator $\Phi^0$ corresponding to the maximal eigenvalue is one-dimensional and is spanned by some $T \in \g$. Then $\pi (T)\in\overline{M}$. Indeed, although the operators $\ric_t$ are not symmetric relative to $\ip$, each of them is symmetric relative to $g_t$. It follows that each of $\ric_t$ is semisimple, with real eigenvalues. The same is true for the operators $\Phi_t$. Moreover, the operators $\Phi_t$ are uniformly bounded for large $t$, hence their eigenvalues also are. As the characteristic polynomial depends continuously on the matrix entries, the maximal eigenvalue of the operator $\Phi^0=\lim_{t \to \infty} \Phi_t$ is the upper limit of the maximal eigenvalues of the $\Phi_t$'s. Now take a sequence of numbers $t_s$ going to infinity and denote $\mu_s$ the maximal eigenvalue of $\Phi_{t_s}$, with $T_s$ a corresponding unit eigenvector (relative to $\ip$). Extracting a subsequence, if necessary, we obtain that the maximal eigenvalue of $\Phi^0$ is $\lim_{s \to \infty}\mu_s$, with the unit vector $T_0=\lim_{s \to \infty}T_s$ a corresponding eigenvector. As the eigenspace of $\Phi^0$ corresponding to the maximal eigenvalue is one-dimensional, we get $\pi(T) = \pi(T_0)\in \overline{M}$ (even though $\Phi^0$ may fail to be semisimple). By the same argument, if the eigenspace of $\Phi^0$ corresponding to the minimal eigenvalue is one-dimensional and is spanned by $T' \in \g$, then $\pi(T')\in \overline{m}$.

Choose the $\lambda_i$'s as follows: $\lambda_1=\dots=\lambda_p=1, \; \lambda_{n-q+1}=\dots=\lambda_n=-1,\; \lambda_i=0$, for $i=p+1, \dots, n-q$, where $p \ge 1, \; q \ge 2$ and $p+q\le n$. Suppose that the skew-symmetric $q \times q$-matrices $J_k$ defined by $(J_k)_{q-n+i,q-n+j}=\<e_k,[e_i,e_j]\>,\; k \le p,\; i, j > n-q$, are linearly independent. We have $\Lambda=\{(i,j,k) \, : \, k \le p, \; i> j > n-q\}$, so from \eqref{E:Phi0},
\begin{equation}\label{E:Phi0J}
    \Phi^0=\tfrac12 \sum\nolimits_{k=1}^p \sum\nolimits_{i,j=n-q+1}^n  (J_k)_{q-n+i,q-n+j}
    ([e_i,e_j]\otimes e_k^* -e_j\otimes e_k^* \ad_{e_i}+e_i\otimes e_k^* \ad_{e_j}),
\end{equation}
hence the matrix of the operator $\Phi^0$ relative to the basis $\{e_i\}$ is
\begin{equation*}
\left(
                   \begin{array}{lll}
                     A & 0_{p \times (n-p-q)} & 0_{p \times q} \\
                     B_1 & 0_{(n-p-q) \times (n-p-q)} & 0_{(n-p-q) \times q}\\
                     B_2 & B_3 & \sum\nolimits_{k=1}^p J_k^2 \\
                   \end{array}
                 \right), \; A_{kl}=\tfrac12 \tr(J_kJ_l^t), \; 1 \le k,l \le p,
\end{equation*}
where $0_{a \times b}$ is the zero ($a \times b$)-matrix, and $B_1,B_2,B_3$ are some matrices of the corresponding dimensions. It follows that the eigenvalues of $\Phi^0$ are the eigenvalues of $A$ (which are positive, as the $J_k$'s are linearly independent, so $A$ is a symmetric positively definite matrix), $0$ (provided $n > p+q$), and the eigenvalues of the symmetric matrix $\sum\nolimits_{k=1}^p J_k^2$ (which are nonpositive, with at least one negative, as that matrix is symmetric nonpositively definite and nonzero). So the maximal eigenvalue of $\Phi^0$ is the maximal eigenvalue of $A$, and the minimal eigenvalue of $\Phi^0$ is the minimal eigenvalue of $\sum\nolimits_{k=1}^p J_k^2$.

\medskip

The proof of assertion~\eqref{IT:eumin}, the only one which deals with the set $m$ of Ricci-minimal vectors, now follows easily: take $p=2, \; q=3$, and suppose that the basis $\{e_i\}$ is chosen in such a way that $e_{n-3+s}=u_s, \; s=1,2,3$. Then by the assumption of assertion~\eqref{IT:eu},
\begin{equation}\label{E:J1J2}
    J_1=\begin{pmatrix}
      0 & a & 0 \\
      -a & 0 & 0 \\
      0 & 0 & 0 \\
    \end{pmatrix}, \;
    J_2=\begin{pmatrix}
      0 & 0 & b \\
      0 & 0 & 0 \\
      -b & 0 & 0 \\
    \end{pmatrix}, \;
    J_1^2+J_2^2=\begin{pmatrix}
      -a^2-b^2 & 0 & 0 \\
      0 & -a^2 & 0 \\
      0 & 0 & -b^2 \\
    \end{pmatrix}.
\end{equation}
As $a, b \ne 0$, the matrices $J_1, J_2$ are linearly independent. So the minimal eigenvalue of $\Phi^0$ is $-a^2-b^2$;
the corresponding eigenspace is one-dimensional and is spanned by $e_{n-2}=u_1$.

\medskip

To treat the remaining assertions \eqref{IT:e1u2}, \eqref{IT:eu}\eqref{IT:e2u3} and \eqref{IT:2s} which deal with the set $M$ of Ricci-maximal vectors, we first compute an eigenvector $T$ of $\Phi^0$ corresponding to the maximal eigenvalue, under the following two assumptions:
\begin{enumerate}[(I)]
  \item \label{it:I}
  that eigenvalue $\lambda_{\max}$ of $A$ (hence of $\Phi^0$) is simple, and %(of multiplicity one)
  \item \label{it:II}
  $\{e_1, \dots, e_p\}$ is the basis of eigenvectors of $A$, with $e_1$ corresponding to $\lambda_{\max}$ (so that $\lambda_{\max}=\tfrac12 \tr(J_1J_1^t)$),
\end{enumerate}
and then give, for each of the three cases \eqref{IT:e1u2}, \eqref{IT:eu}\eqref{IT:e2u3} and \eqref{IT:2s}, the concrete choices of the bases satisfying these assumptions.

We claim that
\begin{equation}\label{E:Tmax}
\begin{gathered}
T=Y + \sum\nolimits_{r=1}^q \eta_r e_{n-q+r}, \quad \text{where} \\
Y=\sum\nolimits_{r,s=1}^q (J_1)_{rs}[e_{n-q+r}, e_{n-q+s}], \qquad \eta=(\lambda_{\max} I_q- \sum\nolimits_{k=1}^p J_k^2)^{-1}\xi,\\
\lambda_{\max}=\tfrac12 \tr(J_1J_1^t), \qquad \xi_r=\sum\nolimits_{s=1}^q \sum\nolimits_{k=1}^p (J_k)_{rs}\<e_k,[e_{n-q+s},Y]\>,
\end{gathered}
\end{equation}
where $\xi, \eta \in \R^q$ and $I_q$ is the $q \times q$ identity matrix. To see that we note that from \eqref{E:Phi0J}, $\Phi^0 Y=\lambda_{\max} Y + \sum\nolimits_{r=1}^q \xi_r \<e_k,[e_{n-q+s},Y]\>e_{n-q+r}$ and $\Phi^0 e_{n-q+r} = \sum\nolimits_{s=1}^q (\sum\nolimits_{k=1}^p J_k^2)_{sr}e_{n-q+s}$, so $\Phi^0 T=\lambda_{\max} Y + \sum_{r=1}^q (\xi+ (\sum_{k=1}^p J_k^2) \eta) = \lambda_{\max}T$, as required (note that $\lambda_{\max} > 0$ and the matrix $\sum\nolimits_{k=1}^p (J_k^2)$ is symmetric nonpositively definite, so the matrix $\lambda_{\max} I_q- \sum\nolimits_{k=1}^p J_k^2$ in the definition of $\eta$ in \eqref{E:Tmax} is indeed nonsingular).

\medskip

We now consider each of the three assertions separately.

\smallskip

To prove \eqref{IT:e1u2}, we take $p=1, \; q=2$, and choose the basis $\{e_i\}$ in such a way that $e_1=e$, $e_{n-1}=u_1, \; e_n=u_2$. Then $J_1=a \left( \begin{smallmatrix} 0 & 1 \\ -1 & 0 \end{smallmatrix} \right)$, where $a=\<e,u_{12}\>$. Suppose that $a \ne 0$. Then assumptions \eqref{it:I} and \eqref{it:II} are trivially satisfied and from \eqref{E:Tmax} we get $Y=2au_{12}, \; \lambda_{\max}=a^2$, $\lambda_{\max} I_2- J_1^2=2a^2I_2, \; \xi_r = \sum\nolimits_{s=1}^2 (J_1)_{rs}\<e,[u_s,Y]\>$, so $\xi=2a^2(\<e,u_{212}\>u_1-\<e,u_{112}\>u_2)$. Then $\eta=(2a^2)^{-1}\xi$, so $T = Y + \eta_1u_1 + \eta_2u_2 = 2\<e,u_{12}\>u_{12} + \<e,u_{212}\>u_1-\<e,u_{112}\>u_2$. We have $\pi(T) \in \overline{M}$, provided $T \ne 0$ and $a=\<e,u_{12}\> \ne 0$. To prove the assertion, it remains to drop the latter condition. But if $\<e,u_{12}\> = 0$, for any three orthonormal vectors $e,u_1,u_2 \in \g$, then $u_{12} \in \Span(u_1,u_2)$, so $\g$ is abelian by Lemma~\ref{L:general}\eqref{IT:br}. This implies $T=0$. Otherwise, the set of triples of orthonormal vectors $e,u_1,u_2 \in \g$, with $\<e,u_{12}\> \ne 0$, is open and dense in the Stiefel manifold $V(3,\g)$, so by continuity and the fact that $\overline{M}$ is closed, $\pi(T)\in \overline{M}$ whenever $T \ne 0$.

\smallskip

For \eqref{IT:2s}, take $p=1,\; q=n-\dim\g'$, and choose the basis $\{e_i\}$ in such a way that $e_1=e \in \g'$ and $e_{n-q+s}=u_s,\; s=1,\dots,q$, is an orthonormal basis for $(\g')^\perp$. Then $(J_1)_{ij}=\<e,u_{ij}\> \ne 0$ by the hypothesis, so assumptions \eqref{it:I} and \eqref{it:II} above are trivially satisfied and from \eqref{E:Tmax} we get $Y=\sum_{ij}\<e,u_{ij}\>u_{ij}$. As $\g$ is two-step nilpotent, $\xi=0$, and hence $\eta=0$. Then $T=Y$ and $\pi(T)\in \overline{M}$, as required.

\smallskip

For \eqref{IT:eu}\eqref{IT:e2u3}, take $p=2, \; q=3$, and choose the basis $\{e_i\}$ in such a way that $e_{n-3+s}=u_s$ for $s=1,2,3$. Then by the hypothesis, the matrices $J_1,J_2$ are given by \eqref{E:J1J2}, with $|a|>|b|>0$ and $A=\mathrm{diag}(a^2, b^2)$, so assumptions \eqref{it:I} and \eqref{it:II} are satisfied with $\lambda_{\max}=\frac12 \tr(J_1J_1^t)=a^2$. Then from \eqref{E:Tmax} we get $\lambda_{\max} I_3-\sum\nolimits_{k=1}^p J_k^2=\mathrm{diag}(2a^2+b^2, 2a^2,a^2+b^2)$. Furthermore, $Y=2au_{12}$, so $\xi_1=2a^2\<e_1,u_{212}\>+2ab\<e_2,u_{312}\>, \; \xi_2=-2a^2\<e_1,u_{112}\>$ and $\xi_3=-2ab\<e_2,u_{112}\>$.
Then $T=Y+\sum_{i=1}^3 \eta_iu_i= 2a T_2$. As $a \ne 0$, it follows that $\pi(T_2) \in \overline{M}$.

To show that $\pi(T_1) \in \overline{M}$ when $T_1 \ne 0$, consider a one-parameter family $h_t$ of inner products on $\g$ defined by the requirement that the basis $\{E_1=te_1, E_2=e_2, \dots, E_n=e_n\}$ is orthonormal. Then $h_t(E_1,X)=t^{-1}\<e_1,X\>, \; h_t(Y,X)=\<Y,X\>$ for all $X \in \g$ and all $Y \perp e_1$, and the hypothesis of the assertion is satisfied, provided $0 < t < |b^{-1}a|$. For each such $t$, the projection to $\mathbb{P}\g$ of the corresponding vector $T_2=T_2(t)$ belongs to $\overline{M}$. Computing $T_2(t)$ and taking the limit when $t \to |b^{-1}a|^{-}$ we obtain $\frac{1}{6ab}T_1$, so $\pi(T_1) \in \overline{M}$.

\section{Proof of Lemmas~\ref{L:general}, \ref{L:notL} and \ref{L:structure}}
\label{S:lemmata}

\begin{proof}[Proof of Lemma~\ref{L:general}]
\eqref{IT:br} If $[X, Y]=aX+bY$, then $[X,[X, Y]]=b[X,Y]$. As $\ad_X$ is nilpotent, $b=0$. Similarly, $a=0$.

\eqref{IT:2br} If $[X,[X, Y]] \ne 0$, then for some $l \ge 3$, we have $(\ad_X)^lY=0$, but $(\ad_X)^{l-1}Y \ne 0$. Acting on the both sides of the equation $[X,[X, Y]]=aX+bY+c[X,Y]$ by $(\ad_X)^{l-1}$ we get $b=0$; acting by $(\ad_X)^{l-2}$ we get $c=0$. Then $a$ is an eigenvalue of $\ad_{[X,Y]}$, so $a=0$.

\eqref{IT:two}  Polarizing the equation $[X,[X, Y]]=0$ we get $[X,[Z, Y]]+[Z,[X, Y]]=0$, for all $X, Y, Z \in \g$. Then from the Jacobi identity, $[Y,[X,Z]]=-[X,[Z, Y]]-[Z,[Y, X]]=2[X,[Y, Z]]$. Interchanging $X$ and $Y$ gives $[X,[Y, Z]]=0$, so $\g$ is two-step nilpotent.
\end{proof}

\begin{proof}[Proof of Lemma~\ref{L:notL}]
If $\dim \g'=1$, then $\g$ is the direct product of a Heisenberg algebra and an abelian ideal (which can be trivial). We can therefore assume that $\dim \g' > 1$.

First suppose that $\operatorname{codim} \g' > 2$. Let $[X,Y]=U \ne 0$ for some $X, Y \in \g$, which, by a small perturbation, can be chosen in such a way that $\Span (X,Y) \cap \g'=0$. Let $Z \notin \Span(X,Y)$ be chosen in such a way that $\Span (X,Y,Z) \cap \g'=0$ (as $\operatorname{codim} \g' > 2$, the set of such $Z$ is open and dense in $\g$). Then the vectors $X,Y,Z,[X,Y]=U$ are linearly independent, so by hypothesis, the vector $[X,Z]$ belongs to their linear span: $[X,Z]=a_1X+a_2Y+a_3Z+bU$. As $U \in \g'$ and $\Span (X,Y,Z) \cap \g'=0$ it follows that $a_1=a_2=a_3=0$, so $[X,Z] \parallel U$. Thus for all $Z$ from an open and dense subset $\mathcal{U} \subset \g$, the vector $[X,Z]$ is nonzero and is parallel to $U$ and $\Span (X,Z) \cap \g'=0$. Then for every $Z \in \mathcal{U}$,  the above arguments applied to the pair $(Z,X)$ in place of $(X,Y)$ tell us that for all $W$ from an open, dense set $\mathcal{U}_Z \subset \g$, the vector $[Z,W]$ is parallel to $[X,Z]$, hence is parallel to the fixed vector $U$. It follows that $\g'= \R U$, a contradiction.

Now consider the case $\operatorname{codim} \g' \le 2$. Then $\g$ is generated by two elements $e_1, e_2$ which can be chosen arbitrarily to satisfy $\Span(e_1,e_2) \oplus \g'=\g$. Then $e_3=[e_1,e_2]$ is nonzero (otherwise $\g$ is abelian) and the vectors $e_1,e_2,e_3$ are linearly independent by Lemma~\ref{L:general}\eqref{IT:br}, so by hypothesis, some nontrivial linear combination $a[e_1,e_3]+b[e_2,e_3]$ lies in $\Span(e_1,e_2,e_3)$. Without loss of generality we may assume that $a \ne 0$. Denote $f_1=e_1+a^{-1}be_2, \; f_2=e_2$. Then the elements $f_1, f_2$ still generate $\g$ and we have $f_3:=[f_1,f_2]=e_3$, so $[f_1,f_3] =[f_1,[f_1,f_2]] \in \Span(f_1,f_2,f_3)$, so $[f_1,[f_1,f_2]]=0$ by Lemma~\ref{L:general}\eqref{IT:2br}. As $\dim \g'\ge 2$, we must have $f_4:=[f_2,f_3]=[f_2,[f_1,f_2]] \ne 0$, so by Lemma~\ref{L:general}\eqref{IT:2br}, the vectors $f_1,f_2,f_3,f_4$ are linearly independent. Then by the Jacobi identity, $[f_1,f_4]=[f_1,[f_2,[f_1,f_2]]]=[f_2,[f_1,[f_1,f_2]]]=0$, as $[f_1,[f_1,f_2]]=0$.  By hypothesis, we have $\rk(f_1,f_2,f_3,f_4,[f_2,f_4])=\dim\mathcal{L}(f_1,f_2,f_4) \le 4$, so $[f_2,f_4]$ must be a linear combination of $f_1,f_2,f_3,f_4$. Then, since $[f_2,f_4] \in \g'$ and $\Span(f_1,f_2)\oplus \g'=\g$, the vector $[f_2,f_4]=[f_2,[f_2,f_3]]$ is a linear combination of $f_3$ and $ f_4=[f_2,f_3]$. So $[f_2,f_4]=0$, by Lemma~\ref{L:general}\eqref{IT:2br}. Therefore, $[f_1,f_4]=[f_2,f_4]=0$. Then $[f_3,f_4]=[[f_1,f_2],f_4]=0$, by the Jacobi identity. As $f_1, f_2$ generate $\g$, it follows that $\g=\Span(f_1,f_2,f_3,f_4)$, with the brackets $[f_1,f_2]=f_3, \; [f_2, f_3]=f_4, \; [f_1,f_3]=[f_1,f_4]=[f_2,f_4]=[f_3,f_4]=0$. Hence $\g$ is the four-dimensional filiform algebra, as required.
\end{proof}
We note in passing that the hypothesis of Lemma~\ref{L:notL} is trivially satisfied when $\dim \g \le 4$, so we get yet another classification of nilpotent algebras of dimension up to four.

\begin{proof}[Proof of Lemma~\ref{L:structure}]
Since $\g$ is not two-step nilpotent, Lemma~\ref{L:general} implies that $[X,[X, Y]] \notin \Span(X,Y,[X,Y])$ for almost all $(X, Y) \in \g^2$. Take one such pair $(X,Y)$. As conditions \eqref{E:rk5} is violated, we have $[X,[X, Y]] + a [Y,[X, Y]] \in \Span(X,Y,[X,Y])$ for some $a \ne 0$, so $[\tilde X,[\tilde X, Y]] \notin \Span(\tilde X,Y,[\tilde X,Y])$, where $\tilde X = X+aY$. By Lemma~\ref{L:general}\eqref{IT:2br}, $[\tilde X,[\tilde X, Y]]=0$, so $[X,[X, Y]] + a [Y,[X, Y]]=0$. It follows that $[X,[X, Y]] \parallel [Y,[Y, X]]$, for all $(X, Y) \in \g^2$. Choose a basis $\{e_i\}$ for $\g$. Let $x_j$ and $y_j$ be the components of $X$ and $Y$ relative to this basis. Denote $\mathbf{K}=\R[x_1, \dots , x_n, y_1, \dots, y_n]$. The components of the vector $[X,[X, Y]]$ relative to $\{e_i\}$ are polynomials $P_i(X,Y) \in \mathbf{K}$. Note that at least one of the $P_i$'s is nonzero by Lemma~\ref{L:general}\eqref{IT:two} and that every nonzero $P_i$ is homogeneous of degree $2$ in the $x_j$'s and homogeneous of degree $1$ (linear) in the $y_k$'s. Moreover, as $[X,[X, Y]] \parallel [Y,[Y, X]]$, there exist nonzero polynomials $f(X,Y)$ and $h(X,Y)$ such that
  \begin{equation}\label{E:propor}
  f(X,Y)[X,[X, Y]]=h(X,Y)[Y,[Y, X]]
  \end{equation}
(for instance, if $P_i \ne 0$, one can take $f(X,Y)=P_i(Y,X),\; h(X,Y)=P_i(X,Y)$). Cancelling the common factor, if necessary, we can assume that $f$ and $h$ in \eqref{E:propor} are coprime over $\mathbf{K}$. Note that $h$ is a nonconstant polynomial, as the left-hand side of \eqref{E:propor} is of degree at least two in the coordinates of $X$. Then from \eqref{E:propor}, every polynomial $P_i$ is divisible by $h$, so $P_i(X,Y)=h(X,Y)Q_i(X,Y)$ for some $Q_i\in \mathbf{K}$, hence
  \begin{equation}\label{E:Psi}
    [X,[X, Y]]=h(X,Y) Q(X,Y) \ne 0, \quad \text{where} \; Q=(Q_1, \dots, Q_n) \in \mathbf{K}^n.
  \end{equation}
Moreover, by moving the greatest common divisor of the $Q_i$'s to $h$, we can assume that $\gcd(Q_1, \dots, Q_n)=1$. From homogeneity of the $P_i$'s, it follows that $h$ is homogeneous of degree $d_1=0,1,2$ in the $x_j$'s and homogeneous of degree $d_2=0,1$ in the $y_k$'s. Then every nonzero $Q_i$ is homogeneous of degree $2-d_1$ in the $x_j$'s and of degree $1-d_2$ in the $y_k$'s. Moreover, $(d_1,d_2) \ne (0,0)$, as $h$ is nonconstant. Furthermore, $(d_1,d_2) \ne (0,1)$, as otherwise from \eqref{E:Psi} we would get $[X,[X, Y]]=h(Y) Q(X)$, which would imply $h(X) Q(X)=0$, so either $h$ or $Q$ vanish, and then so does $[X,[X, Y]]$. By a similar argument, $(d_1,d_2) \ne (2,0)$. In the case $(d_1,d_2) = (1,1)$, equation~\eqref{E:Psi} implies that $[X,[X, Y]]=h(X,Y) Q(X)$, where $h$ is a nonzero bilinear form and $Q \in \End(\g)$. Then from $[X,[X, Y]] \parallel [Y,[Y, X]]$, we obtain $Q(X) \parallel Q(Y)$, for all $X, Y \in \g$, which means that $Q$ is the tensor product of a nonzero linear form and a constant vector. This contradicts the fact that $\gcd(Q_1, \dots, Q_n)=1$.

Therefore only the following two cases may occur: $(d_1,d_2)=(1,0),\,(2,1)$.

\medskip

In the first case, $[X,[X, Y]]=h(X) Q(X,Y)$, for some nonzero linear form $h$ and a bilinear map $Q:\g^2 \to \g$ (which must be skew-symmetric). Let $\h = \ker h$ and let $e$ be the vector dual to $h$ with respect to some inner product $\ip$ on $\g$ (the latter being scaled in such a way that $e$ is unit). Then $h(X)=\<X,e\>$ and $[X,[Z, Y]]+[Z,[X, Y]]=\<X,e\> Q(Z,Y)+\<Z,e\>Q(X,Y)$, by polarisation. Subtracting from this the same equation, with $(X,Y,Z)$ cyclicly permuted and using the Jacobi identity we get
  \begin{equation}\label{E:bracket}
    [X,[Y,Z]]=-\tfrac13 \<Z,e\>Q(X,Y)+\tfrac13 \<Y,e\>Q(X,Z)+\tfrac23 \<X,e\>Q(Y,Z).
  \end{equation}
It follows that $[\h,[\h,\h]]=0$. We claim that $\h$ is an ideal. As $\operatorname{codim} \h =1$ it suffices to show that $\h$ is a subalgebra. Suppose there exist $Y,Z \in \h$ such that $[Y,Z] \notin \h$, that is, $\<[Y,Z],e\> \ne 0$. Then for arbitrary $W,X \in \h$ equation \eqref{E:bracket} and the fact that $[\h,[\h,\h]]=0$ give $0=[W,[X,[Y,Z]]] =-\tfrac13\<[Y,Z],e\>Q(W,X) =\<[Y,Z],e\>[W,[X,e]]$, so $[\h,[\h,e]]=0$ and hence by the Jacobi identity, $[e,[\h,\h]]=0$ which implies that $[\h,\h] \subset \z$. Then for any $X \in \h$, we have $0=[e,[X,[Y,Z]]]=[e,[X,\<[Y,Z],e\>e]]$, so $[e,[\h,e]]=0$, which means that $\g$ is two-step nilpotent, a contradiction. Therefore $\h$ is a two-step nilpotent ideal of $\g$ and the operator $D=\ad_e$ restricted to $\h$ acts as a nilpotent derivation. Now from \eqref{E:bracket}, for any $X \in \h$ we get $[X,[X,e]]=-\tfrac13 Q(X,X)=0$, as $Q$ is skew-symmetric, so $[DX,X]=0$, so $\g$ is an algebra of class \eqref{IT:c}.
%So $\h$ is a subalgebra of $\g$, which implies that $\h$ is an  ideal (if $[X,e]=\lambda e \mod \h$ for some $X \in \h$, then acting by $\ad_X^N$, with $N$ large enough, we get $0=\ad_X^{N+1}e=\lambda^Ne \mod \h$, so $\lambda=0$).

\medskip

In the second case, when $(d_1,d_2)=(2,1)$ in \eqref{E:Psi}, we get $[X,[X, Y]]=h(X,Y) c$, for some nonzero $c \in \g$ and a nonzero polynomial function $h$ which is homogeneous of degree $d_1=2$ in the $x_j$'s and is linear in the $y_k$'s. We can assume that the polynomial $h(X,Y)$ is not divisible by any linear form $\tilde h(X)$, as otherwise $[X,[X, Y]]=\tilde h(X) (h(X,Y)(\tilde h(X))^{-1} c)$ and we get back to the case $(d_1,d_2)=(1,0)$ considered in the previous paragraph.

We have $[X,[X,[X, Y]]]=h(X,[X,Y]) c=h(X,Y)[X,c]$, so by Lemma~\ref{L:general}\eqref{IT:br}, $[X,c]=0$ for all $X$ such that $h(X,Y) \ne 0$ for at least one $Y \in \g$. As $h \ne 0$, this holds for almost all $X \in \g$, so $c \in \z$ (and hence $h(X,c)=h(c,X)=0$, for all $X \in \g$). Factoring out the ideal $\R c$ we get an $(n-1)$-dimensional nilpotent algebra $\h$ whose Lie bracket we denote $[\cdot,\cdot]_{\h}$. We have $[X,[X, Y]_{\h}]_{\h}=0$, for all $X, Y \in \h$, so $\h$ is two-step nilpotent by Lemma~\ref{L:general}\eqref{IT:two} and $\g$ is a central extension of $\h$ by a cocycle $\omega$. Introduce an arbitrary inner product $\ip$ on $\g$ and identify $\h$ with the subspace $(\R c)^\perp$. Then $[X,Y]=[X,Y]_{\h}+\omega(X,Y)c$ for $X,Y \in \h$.

As for $X, Y \in \h$ we have $\omega(X,[X,Y]_\h)=h(X,Y)$, it remains to show that for almost all $X \in \h$, there exists $Y \in \h$ such that $h(X,Y)=0$ and $h(Y,X) \ne 0$. Suppose $\mathcal{U} \subset \h$ is an open subset such that for all $X \in \mathcal{U}$ this property is violated. Define a map $v:\h \to \h$ by $\<v(X),Y\>=h(X,Y)$, for $Y \in \h$. Note that $v(X)$ is not identically zero, so $v(X) \ne 0$ for almost all $X \in \h$. Replacing $\mathcal{U}$ by a smaller open subset, if necessary, we can assume that $v(X) \ne 0$, for all $X \in \mathcal{U}$. For every $X \in \h$, let $S_X \in \End(\h)$ be a symmetric operator defined by $\<S_XY,Y\>=h(Y,X)$ (note that $S_X$ is not identically zero). Then for all $X \in \mathcal{U}$, we have $Y \perp v(X) \implies \<S_XY,Y\>=0$, which implies that there exists a map $\tilde T: \mathcal{U} \to \h$ such that for all $X \in \mathcal{U}, \; S_X=\tilde T(X) \otimes v(X)^*+ v(X) \otimes \tilde T(X)^*$. The components $v_i(X)$ of the vector $v(X)$ relative to an orthonormal basis $\{e_i\}$ for $\h$ are quadratic forms in $x_i$, the coordinates of $X$ relative to $\{e_i\}$. By a small perturbation of the basis, we can assume that all of them are not (identically) zero. Then on $\mathcal{U}, \; (S_X)_{ii}=2 (\tilde T(X))_i v_i(X)$, so the $(\tilde T(X))_i$'s are rational functions on $\mathcal{U}$. Clearing the denominators we get
  \begin{equation}\label{E:KXf}
  f(X)S_X= T(X) \otimes v(X)^*+ v(X) \otimes T(X)^*
  \end{equation}
for some nonzero polynomial function $f(X)$ and some polynomial vector $T(X)$. Polynomial equation \eqref{E:KXf} holds for all $X$ from an open set $\mathcal{U} \subset \h$, hence for all $X \in \h$. Dividing both sides by $d(X)=\gcd(f(X), T_1(X), \dots, T_{n-1}(X))$ and replacing $f(X)$ by $f(X)/d(X)$ and $T_i(X)$ by $T_i(X)/d(X)$, we can assume that $\gcd(f(X), T_1(X), \dots, T_{n-1}(X))=1$. Then for any prime factor $\tilde f$ of $f$ over $\R[x_1, \dots, x_{n-1}]$, there exists $i=1, \dots, n-1$ such that $T_i$ is not divisible by $\tilde f$. But then from \eqref{E:KXf}, $\tilde f \mid T_i v_i$, so $\tilde f \mid v_i$, and $\tilde f \mid T_i v_j+T_jv_i$, for all $j \ne i$, so $\tilde f \mid v_j$, hence all the components of $v$ are divisible by $\tilde f$. Dividing both sides of \eqref{E:KXf} by $\tilde f$ and repeating the arguments we obtain that all the $v_i$'s are divisible by $f$. As all the $v_i$'s are quadratic forms, $f$ must be a homogeneous polynomial of degree $d \le 2$. But $f$ cannot be a constant, as then the left-hand side of \eqref{E:KXf} is a matrix whose entries are linear forms of $X$, while the degree of the right-hand side is at least two ($T(X)$ is not identically zero, as $S_X$ is not). Moreover, $f$ cannot be linear, as otherwise polynomial $h(X,Y)=\<v(X),Y\>$ is divisible by a linear form $f(X)$ contradicting the assumption made above. So $f$ must be a quadratic form. But this again leads to a contradiction, as then $v(X)=f(X) c_0$, where $c_0 \ne 0$ is some constant vector from $\h$, so $0=h(X,X)=\<v(X),X\>=f(X)\<c_0, X\>$, for all $X \in \h$, so either $f$ or $c_0$ is zero, hence $v(X)=0$.

It follows that the set of $X \in \h$ for which there exists $Y \in \h$ such that $\omega(X,[X,Y]_\h)=0$ and $\omega(Y,[X,Y]_\h) \ne 0$ is dense, so $\g$ is an algebra of class~\eqref{IT:b}.

\medskip

We now assume that $\g$ belongs to class~\eqref{IT:c} and prove the remaining part of lemma. Clearly, for almost all $(X_1,X_2,X_3) \in \g^3$ the subspace $L:=\mathcal{L}(X_1,X_2,X_3)$ has the same dimension $N$. If $N \le 4$, then by Lemma~\ref{L:notL} the algebra $\g$ is either abelian or two-step nilpotent or has a codimension one abelian ideal, which contradicts the hypothesis. It follows that $N = 5, 6$. In the both cases we will need the following two observations. We have $\g=\R c \oplus \h$, where $\h$ is a two-step nilpotent ideal and $D=\ad_{c|\h}$ is a nilpotent derivation of $\h$ satisfying $[DU,U]=0$, for all $U \in \h$. Then $[DU,V]=[U,DV]$, so since $D$ is a derivation, we obtain
\begin{equation}\label{E:duv}
    D[U,V]=2[DU,V]=2[U,DV], \quad \text{for all}\; U, V \in \h.
\end{equation}
Furthermore, if we choose a triple $(X_1, X_2, X_3) \in \g^3$, with $\dim L=N$ and denote $W=\Span(X_1,X_2,X_3)$, then $\dim W = 3$. As both the hypothesis and the claim depend only on $W=\Span(X_1,X_2,X_3)$, rather than on the triple $(X_1,X_2,X_3)$ itself, we can assume that for almost all $W$ in the Grassmannian $G(3,\g)$ we have $\dim (W + [W,W])=N$. We can additionally assume that $[W,[W,W]] \ne 0$ (otherwise every three-fold bracket in $\g$ would be zero, so $\g$ would be two-step nilpotent). Moreover, as the codimension one ideal $\h \subset \g$ is two-step nilpotent, we get $W \not\subset \h$, so we can choose a basis $\{c, X, Y\}$ for $W$ in such a way that $X,Y \in \h,\; c\notin \h$ (note that it does not matter, which $c \notin \h$ to choose to define the derivation $D = (\ad_c)_{|\h}$). Furthermore we can assume that $[X, Y] \ne 0$, as $\h$ is not abelian.

We now consider the two cases $N=5,6$ separately.

\medskip

Suppose $N=6$, so that for almost all triples $(X_1, X_2, X_3) \in \g^3$, we have $\dim L=6$. As condition \eqref{E:rk7} is violated, $X_{312}$ lies in $L$. Moreover, as this still holds if we replace $X_1,X_2,X_3$ by any three vectors spanning the same three-space $W$, any triple bracket of the vectors $X_1,X_2,X_3$ lies in $L$. It follows that $L$ is a six-dimensional subalgebra of $\g$ generated by any basis for $W$, so $L=W+[W,W]$ which in particular implies that $\dim L' \ge 3$. Furthermore, as the above argument shows, $L$ itself is a Lie algebra of class~\eqref{IT:c}, so $L=\R c \oplus \m$, where $\m=\h \cap L$ is a five-dimensional two-step nilpotent (nonabelian) ideal and $D=\ad_{c|\m}$ is a nilpotent derivation of $\m$ satisfying $[DU,U]=0$ and \eqref{E:duv}, for all $U,V \in \m$.

One can now browse through one of the classification lists of six-dimensional nilpotent Lie algebras available in the literature (the three algebras of assertion~\eqref{IT:six} are $L_{6,19}(0)$, $L_{6,23}$ and $L_{6,12}$ respectively from \cite{dG}, or $\g_{6,19}, \g_{6,20}$ and $\g_{6,2}$ respectively from \cite{Niel}). We will use the classification of five-dimensional algebras instead, for the ideal $\m$.

Up to an isomorphism, there are three two-step nilpotent nonabelian five-dimensional Lie algebras: the Heisenberg algebra $\h_5$, the direct product of the Heisenberg algebra $\h_3$ and the abelian ideal $\a_2$, and the algebra $\Span(X,Y_1,Y_2,Z_1,Z_2)$ defined by the relations $[X,Y_1]=Z_1, \; [X,Y_2]=Z_2$ (the algebras $L_{5,4}, L_{5,2}$ and $L_{5,8}$ from \cite{dG} respectively). % or Nielsen, but I don't have 5-dim

We first observe that $\m$ cannot be isomorphic to the latter algebra. Indeed, arguing by contradiction, from $[DX,X]=0$ we get $DX \in \Span(X,Z_1,Z_2)=\R X \oplus \m'$. As $D$ is nilpotent and $D\m' \subset \m'$, we get $DX \in \m'$. Then from \eqref{E:duv} with $U=X$ we obtain $[X,D\m]=0$, so $D\m \subset \m'$. It follows that $L' = \m'$ contradicting the fact that $\dim L' \ge 3$.

Similarly, $\m$ cannot be isomorphic to the Heisenberg algebra $\h_5$ (defined by the relations $[X_1,Y_1]=Z, \; [X_2,Y_2]=Z$). Otherwise, as $D$ is nilpotent and $\m'=\R Z$, we get $DZ=0$. Then \eqref{E:duv} implies $[D\m,\m]=0$, so $D\m \subset \R Z$, hence $\dim L' = 1$, a contradiction.

The only possible case is therefore when $\m$ is the direct product of the Heisenberg algebra $\h_3$ and the abelian ideal $\a_2$, so $\m=\Span(X,Y,Z,A_1,A_2)$, with the only nonzero bracket $[X,Y]=Z$. Then $\z(\m)=\Span(A_1,A_2,Z)$ and $\m'=\R Z$. From the fact that both these subspaces are $D$-invariant and that $D$ is nilpotent, we get $DZ=0$ and $DA_1=\rho A_2 + \sigma Z,\; DA_2=\theta Z$, for some $\rho, \sigma, \theta \in \R$ (changing the basis for $\Span(A_1, A_2)$ if necessary). Then the left-hand side of \eqref{E:duv} vanishes identically, so $D\m \in \z(\m)$, that is, $DX= \alpha_1 A_1 + \alpha_2 A_2 + \gamma Z, \; DY=\beta_1 A_1 + \beta_2 A_2 + \delta Z$, for some $\alpha_i, \beta_i, \gamma, \delta \in R$. Then $L'=\R Z + D\m=\Span(Z, \rho A_2, \alpha_1 A_1 + \alpha_2 A_2, \beta_1 A_1 + \beta_2 A_2)$, so from $\dim L' \ge 3$ we obtain $\rk\left( \begin{smallmatrix} 0 & \alpha_1 & \beta_1 \\ \rho & \alpha_2 & \beta_2 \end{smallmatrix} \right)=2$.

If $\rho = 0$, the matrix $Q=\left( \begin{smallmatrix} \alpha_1 & \beta_1 \\ \alpha_2 & \beta_2 \end{smallmatrix} \right)$ is nonsingular. Replacing $c$ by $c+\gamma Y - \delta X$ we can assume that $\gamma=\delta=0$. As $\sigma$ and $\theta$ cannot be both zero (as otherwise $L$ is two-step nilpotent) we can assume that $DA_1= \sigma Z,\; DA_2=0, \; \sigma \ne 0$, and then, by a change of basis for $\Span(X, Y)$, that $DX= \alpha_1 A_1, \; DY= \beta_2 A_2, \; \alpha_1 \beta_2 \ne 0$. Now changing the elements of the basis $\{X,Y,A_1,A_2,Z\}$ to $\{X, \alpha_1^{-1} \sigma^{-1}Y, \alpha_1 A_1, \alpha_1^{-1} \sigma^{-1} \beta_2 A_2, \alpha_1^{-1} \sigma^{-1}Z\}$ respectively we get the relations of algebra \eqref{E:dimsix2}.

Suppose $\rho \ne 0$. Replacing $X, Y$ by $X-\rho^{-1}\alpha_2 A_1, Y-\rho^{-1}\beta_2 A_1$ respectively we can assume that $\alpha_2 = \beta_2 = 0$. Then changing the basis for $\Span(X, Y)$ we can further assume that $\beta_1=0$ (and so $\alpha_1 \ne 0$, as otherwise $\dim L' <3)$. Replacing $c$ by $c+\gamma Y - \delta X$, $A_1$ by $\rho^{-1}A_1$ and $A_2$ by $A_2 + \sigma Z$, we get $DX= \alpha A_1, \; DY=0, \; DA_1= A_2,\; DA_2=\theta Z, \; DZ=0$, where $\alpha=\alpha_1 \rho \ne 0$. Now replacing $X, Y$ by $\alpha^{-1}X, \alpha Y$ respectively we obtain $DX= A_1, \; DY=0$, $DA_1= A_2,\; DA_2=\theta Z, \; DZ=0$.
If $\theta = 0$ we get algebra \eqref{E:dimsix1}. If $\theta \ne 0$, we replace $Z, Y$ by $\theta Z, \theta Y$ respectively, which gives algebra \eqref{E:dimsix3}.

\medskip

Suppose $N=5$, so that for almost all triples $(X_1, X_2, X_3) \in \g^3$, we have $\dim L=5$ (note that condition \eqref{E:rk7} is then trivially violated). Choose one such triple and denote $W=\Span(X_1,X_2,X_3)$. Then $\dim W = 3$, and as it was shown above, we can assume that for almost all $W$ in the Grassmannian $G(3,\g)$ we have $\dim (W + [W,W])=5$ and $[W,[W,W]] \ne 0$. Furthermore, we can choose a basis $\{c, X, Y\}$ for $W$ in such a way that $X,Y \in \h,\; c\notin \h$ and $[X, Y] \ne 0$.

We first prove that $L$ is a subalgebra of $\g$. As $\rk(c,X,Y,[X,Y],DX,DY) =5$ and $c \notin \h$, we get $\rk(X,Y,[X,Y],DX,DY) =4$, so $\alpha_1 X + \alpha_2 Y + \gamma [X,Y] + \beta_1 DX + \beta_2 DY=0$ for some $\alpha_i, \gamma, \beta_i \in \R$ not all of which are zero. Taking the bracket with $X$ we get $\alpha_2 [X,Y] + \frac12 \beta_2 D[X,Y]=0$, by \eqref{E:duv}. As $D$ is nilpotent and $[X,Y] \ne 0$, it follows that $\alpha_2=0$ and $\beta_2 D[X,Y]=0$. Similarly $\alpha_1=0$ and $\beta_1 D[X,Y]=0$. If $D[X,Y] \ne 0$, then  $\beta_1=\beta_2=0$, hence $\gamma [X,Y] = 0$, a contradiction. Therefore $D[X,Y]=0$ and $\gamma [X,Y] + \beta_1 DX + \beta_2 DY=0$, with at least one of $\beta_i$ nonzero; suppose $\beta_2 \ne 0$. Then acting by $D$ we get $D^2Y=-\beta_2^{-1}\beta_1 D^2X$. Let $Z=D^2X$. If $Z=0$, then $D^2X=D^2Y=0$ and from the fact that $X, Y \in \h, \; D[X,Y]=0$ and equation \eqref{E:duv}, we find that $[W,[W,W]] = 0$, a contradiction. It follows that $Z \ne 0$, which implies that the vectors $[X,Y]$ and $DX$ are linearly independent (as both are nonzero and $[X,Y] \in \ker D$, while $DX \notin \ker D$). Now, for almost all $t \in \R$, the above arguments work for the space $W_t=\Span(c,X,Y+tDX)$ in place of $W$. Taking such a $t$ (and using the fact that $[X,DX]=0$) we find that $\gamma(t)[X,Y] + \beta_1(t) DX + \beta_2(t)(DY+tZ)=0$, for some $\gamma(t), \beta_i(t) \in \R$, which are not all zeros. Moreover, as the vectors $[X,Y]$ and $DX$ are linearly independent, we get $\beta_2(t) \ne 0$, so $Z=-t^{-1}(\beta_2(t)^{-1}(\gamma(t)[X,Y] + \beta_1(t) DX) + DY) \in [W,W]$. It follows that $D^2X, D^2Y\in [W,W]$. Now the fact that $h$ is two-step nilpotent, equation $D[X,Y]=0$ and \eqref{E:duv} imply that all the other three-fold brackets of elements of $W$ vanish, so we obtain $[W,[W,W]] \subset [W,W]$, hence $L=W+[W,W]$ is a subalgebra.

Furthermore, the above argument shows that the subspace $\Span([X,Y],DX,DY)$ has dimension two and contains the vector $Z=D^2X$. As $Z$ and $A:=DX$ are linearly independent by Lemma~\ref{L:general}\eqref{IT:2br}, we get $\Span([X,Y],DX,DY)=\Span(A, Z)$, so $L=\Span(c,X,Y,[X,Y],DX,DY)=\Span(c,X,Y,A,Z)$. To find relations for $L$ we first consider the two-step nilpotent subalgebra $\m:=\Span(X,Y,A,Z)=L \cap \h$. We know that $[X,Y]=pA+qZ \ne 0$, so from $D[X,Y]=0$ it follows that $pZ+qDZ=0$, hence $p=0$ and $DZ=0$ since $D$ is nilpotent. Then $q \ne 0$ and by scaling $Y$ we get $[X,Y]=Z$. Hence $Z \in \z(\m)$ and also $[X,A]=[A,Z]=0$ and $[Y,A]=[Y,DX]=-\frac12 D[X,Y]=0$ from \eqref{E:duv}. So the only nonzero bracket in $\m$ is $[X,Y]=Z$. Furthermore, we have $[c,X]=A,\; [c,A]=Z$ by construction and $[c,Z]=DZ=0, \; [c,Y]=DY=r A+s Z$ from the above. Replacing $Y$ by $Y-p_1 X - q_1 A$ we get $[c,Y]=0$, without violating the fact that $[X,Y]=Z$.
This gives the required relations, for almost all choices of $W \in G(3,\g)$.
\end{proof}

\begin{remark}
It can be shown that any Lie algebra $\g$ satisfying Lemma~\ref{L:structure}\eqref{IT:five} (that is, an algebra $\g$ of class~\eqref{IT:c} with $N=5$) is isomorphic to one of the following algebras (which are one-dimensional extensions of the direct sum of a Heisenberg algebra and an abelian ideal):
$\g\cong\Span(X_1, \dots, X_k, Y_1, \dots, Y_k, Z_1, \dots, Z_l)$, with $k \ge 2$, defined by the relations
\begin{equation*}
     [X_i,Y_i]=Z_1, \; [X_1, Z_2]=Y_1, \qquad \text{or} \qquad[X_i,Y_i]=Z_1, \; [X_1, X_2]=Y_1,
\end{equation*}
where $l \ge 2$ in the first case and $l \ge 1$ in the second case. We do not use this fact in the proof of Theorem~\ref{T:maxmin}.
\end{remark}

\bibliographystyle{amsplain}
\bibliography{chnn}

\end{document}